\documentclass{amsart}
\usepackage[toc,page]{appendix}
\usepackage[headheight=12.0pt]{geometry}
\usepackage[all,cmtip]{xy}
\usepackage{amsmath}
\usepackage{amsfonts}
\usepackage{amssymb}
\usepackage{mathrsfs}   
\usepackage{amsthm}
\usepackage{xcolor}
\usepackage{cite}
\usepackage{stmaryrd}
\usepackage{graphicx}
\usepackage{pgf}
\usepackage{eepic}
\usepackage{pstricks}
\usepackage{epsfig}
\usepackage{fancyhdr}
\usepackage{tikz,caption,float}
\usepackage[backref=page]{hyperref}

\usepackage{amsbsy}

\hypersetup{
     colorlinks   = true,
     citecolor    = red,
     linkcolor = blue,
     urlcolor = purple
}
    
\setlength{\footskip}{30pt}

\setcounter{tocdepth}{1}

\geometry{
 a4paper,
 left=30mm,
 right=30mm,
 top=40mm,
 bottom=40mm,
 }
\linespread{1.2}
 
  \newcommand\imCMsym[4][\mathord]{%
  \DeclareFontFamily{U} {#2}{}
  \DeclareFontShape{U}{#2}{m}{n}{
    <-6> #25
    <6-7> #26
    <7-8> #27
    <8-9> #28
    <9-10> #29
    <10-12> #210
    <12-> #212}{}
  \DeclareSymbolFont{CM#2} {U} {#2}{m}{n}
  \DeclareMathSymbol{#4}{#1}{CM#2}{#3}
}
\newcommand\alsoimCMsym[4][\mathord]{\DeclareMathSymbol{#4}{#1}{CM#2}{#3}}

\imCMsym{cmmi}{124}{\CMjmath}
\imCMsym[\mathop]{cmsy}{113}{\CMamalg}
\imCMsym[\mathop]{cmex}{96}{\CMcoprod}
\alsoimCMsym[\mathop]{cmex}{97}{\CMbigcoprod}

 \pagestyle{fancy}
 \fancyhf{}

\theoremstyle{plain}
\newtheorem*{theoremu}{Theorem}
\newtheorem{theorem}{Theorem}[section]
\newtheorem{theoremn}{Theorem}

\newtheorem{proposition}[theorem]{Proposition}

\newtheorem{corollary}[theorem]{Corollary}

\newtheorem*{corollaryu}{Corollary}
\newtheorem{lemma}[theorem]{Lemma}

\theoremstyle{definition}
\newtheorem*{definitionu}{Definition}

\newtheorem{definition}[theorem]{Definition}

\theoremstyle{remark}
\newtheorem{remark}[theorem]{Remark}
\newtheorem*{remarku}{Remark}

\newcommand{\N}{{\mathbb N}}

\newcommand{\Q}{{\mathbb Q}}

\newcommand{\F}{{\mathbb F}}

\renewcommand{\P}{{\mathbb P}}

\newcommand{\isomto}{\overset{\sim}{\rightarrow}}

\newcommand{\lser}[1]{(\!(#1)\!)}
\newcommand{\pow}[1]{\llbracket #1 \rrbracket}

\newcommand{\spec}[1]{\mathrm{Spec}\left(#1\right)}

\newcommand{\spf}[1]{\mathrm{Spf}\left(#1\right)}

\newcommand{\cal}[1]{\mathcal{#1}}

\newcommand{\norm}[1]{\left\vert#1\right\vert}

\newcommand{\pn}{(\varphi,\nabla)}

\newcommand{\cris}{\mathrm{cris}}
\newcommand{\rig}{\mathrm{rig}}

\newcommand{\et}{\mathrm{\acute{e}t}}

\theoremstyle{definition}
\newtheorem*{AssumptionStar}{Assumption $(\star)$}
\newtheorem*{AssumptionStarSep}{Assumption $(\star^{\rm s})$}
\SetSymbolFont{stmry}{bold}{U}{stmry}{m}{n}
\usepackage{bm}
\pdfstringdefDisableCommands{\renewcommand*{\bm}[1]{#1}}
\usepackage{listings}
\usepackage{mathrsfs}
\DeclareMathAlphabet{\mathscrbf}{OMS}{mdugm}{b}{n}

\title{Good reduction of K3 Surfaces in equicharacteristic $\bm{p}$}

\author{Bruno Chiarellotto}
\address[Chiarellotto]{Dipartimento di Matematica ``Tullio Levi-Civita''\\ Universit\`a degli Studi di Padova\\ Via Trieste 63 \\35121 Padova\\ Italy}
\email{chiarbru@math.unipd.it}

\author{Christopher Lazda}
       \address[Lazda]{Universiteit van Amsterdam\\ Korteweg--de\thinspace Vries Institute for mathematics\\  P.O. Box 94248 \\1090 GE\\ Amsterdam\\ the Netherlands}
       \email{c.d.lazda@uva.nl}
       
\author{Christian Liedtke}
\address[Liedtke]{TU M\"unchen, Zentrum Mathematik - M11, Boltzmannstr. 3, 85748 Garching bei M\"unchen, Germany}
\email{liedtke@ma.tum.de}

\fancyhead[RO]{B. Chiarellotto, C. Lazda, C. Liedtke}
\fancyhead[LE]{}
\fancyfoot[C]{\thepage}

\begin{document}

\begin{abstract} 
We show that for smooth and proper varieties over local fields with no non-trivial vector fields, good reduction descends over purely inseparable extensions. 
We use this to extend the N\'eron--Ogg--Shafarevich criterion for K3 surfaces of \cite{CLL17} to the equicharacteristic $p>0$ case.
\end{abstract}

\maketitle 

\tableofcontents

\section*{Introduction}

Let $\cal{O}_K$ be an excellent Henselian DVR with perfect residue field $k$ and fraction field $K$. Let $K^\mathrm{s}$ be a separable closure of $K$ and let $G_K=\mathrm{Gal}(K^{\rm s}/K)$ be the corresponding absolute Galois group. Then, the residue field $\bar k$ of $K^{\rm s}$ is an algebraic closure of $k$ and we let $G_k$ denote the corresponding absolute Galois group. Let $X$ be a K3 surface over $K$.

\begin{definitionu} A model for $X$ over $\mathcal{O}_K$ is a flat and proper morphism $\mathcal{X}\rightarrow \spec{\mathcal{O}_K}$ of algebraic spaces together with an isomorphism $X\overset{\cong}{\longrightarrow} \mathcal{X}_K$.
\end{definitionu}

We say that $X$ has good reduction if it admits a smooth model over $\mathcal{O}_K$. It follows from the smooth and proper base change theorem that if $X$ has good reduction, then the $G_K$-action on $H^2_\et(X_{K^{\rm s}},\Q_\ell)$ is unramified for any prime $\ell\neq \mathrm{char}(k)$. In order to study the extent to which the converse of this is true, we consider the following (strong) notion of semistable reduction.

\begin{AssumptionStar}
  A K3 surface $X$ over $K$ satisfies ($\star$) if
  there exists a finite field extension $L/K$ and a model $\mathcal{X}$ for $X_L$ over $\mathcal{O}_K$ such that:
  \begin{enumerate}
  \item the special fibre of $\mathcal{X}$ is a strict normal crossing divisor;
  \item the relative canonical divisor $\omega_{\mathcal{X}/\mathcal{O}_L}$ is trivial.
  \end{enumerate}
\end{AssumptionStar}

In honour of Kulikov's results \cite{Kul77}, such a model $\mathcal{X}$ is called a \emph{Kulikov} model. 
In equicharacteristic zero, Assumption $(\star)$ is always 
satisfied \cite{Kul77, PP81}. Also, if the residue characteristic $p$ is at least $5$, Assumption $(\star)$ is satisfied whenever $X$ has potential scheme theoretic semistable reduction \cite{LM18, Mau14}. For example, this is satisfied if $X$ admits an ample invertible sheaf $\cal{L}$, whose self-intersection number
satisfies $\cal{L}^2+4<p$. We refer to \cite[\S3]{LM18} for further results and discussion.

\subsection*{Good reduction in characteristic zero}
If $\mathrm{char}(K)=0$, then converse results, that is, N\'eron-Ogg-Shafarevich criteria for good reductions
of K3 surfaces have been established in \cite{CLL17, LM18, Mat15}. More precisely, the main result (Theorem 1.3) of \cite{LM18} reads as follows.

\begin{theoremn}[Liedtke--Matsumoto] \label{theo: LM18 main} Assume that $\mathrm{char}(K)=0$, let $X$ be a K3 surface over $K$, satisfying Assumption $(\star)$, and let $\ell$ be a prime $\neq \mathrm{char}(k)$. Consider the following three statements:
\begin{enumerate}
\item the $G_K$-action on $H^2_\et(X_{K^{\rm s}},\Q_\ell)$ is unramified; \label{cond: unram} 
\item $X$ has good reduction over a finite and unramified extension $L/K$;
\item $X$ admits a model $\mathcal{X}$ that is projective over $\mathcal{O}_K$ and whose special fibre is a K3 surface with at worst RDP singularities. 
\end{enumerate}
Then, we have $(1)\Leftrightarrow(2)\Rightarrow (3)$.
\end{theoremn}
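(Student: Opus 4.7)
The plan splits naturally along the three implications. The easiest is $(2)\Rightarrow(1)$: if $X_L$ admits a smooth proper model $\mathcal{Y}/\mathcal{O}_L$ with $L/K$ finite unramified, then smooth and proper base change identifies $H^2_\et(X_{K^\mathrm{s}},\Q_\ell)$ with $H^2_\et(\mathcal{Y}_{\bar k},\Q_\ell)$ as $G_L$-modules, and unramifiedness of $L/K$ means $I_K=I_L$, which acts trivially on the cohomology of the smooth special fibre.

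For $(1)\Rightarrow(2)$, invoke Assumption $(\star)$ to produce a Kulikov model $\mathcal{X}/\mathcal{O}_L$ for some finite $L/K$. I would then appeal to the Kulikov--Persson--Pinkham trichotomy in its mixed characteristic form (as in Nakkajima, and in \cite{LM18} itself): the special fibre is Type I, II, or III, and this type is detected by the nilpotency index of the logarithm of monodromy (equivalently the local monodromy filtration) acting on $H^2_\et(X_{L^\mathrm{s}},\Q_\ell)$. Hypothesis (\ref{cond: unram}) says $I_K$, and hence $I_L$, acts trivially, forcing the Kulikov model to be Type I, i.e.\ already smooth over $\mathcal{O}_L$. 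One then has to descend smoothness of the model from $\mathcal{O}_L$ down to the maximal unramified subextension $L^{\rm nr}\cap L$ of $K$ inside $L$; this is carried out by a Galois descent on a chosen polarization combined with a limiting/spreading-out argument showing that smooth proper good reduction over a ramified extension with unramified Galois action already occurs over the unramified core.

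For $(2)\Rightarrow(3)$, start from a smooth proper model $\mathcal{Y}/\mathcal{O}_L$, $L/K$ finite unramified, and a polarization $\mathcal{L}$ of $X$. In general $\mathcal{L}$ will not extend to a relatively ample line bundle on $\mathcal{Y}$, but after a finite sequence of flops in the central fibre (which preserve smoothness for K3 relative minimal models) one obtains a birational smooth model on which $\mathcal{L}$ extends to a line bundle that is nef and big on $\mathcal{Y}_k$. Contracting the finitely many $(-2)$-curves on which this extension has degree zero via Artin's theorem produces a projective model with only rational double point singularities on the special fibre. Finally, the polarization and hence the contraction are $\mathrm{Gal}(L/K)$-equivariant, so Galois descent from $\mathcal{O}_L$ to $\mathcal{O}_K$ yields the projective RDP model of (3).

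The main obstacle is the descent step inside $(1)\Rightarrow(2)$: producing good reduction on the nose over an unramified extension starting from a Kulikov model over a possibly ramified $L/K$. The Type I conclusion gives smoothness over $L$ for free, but arguing that this does not genuinely use the ramification of $L$ is delicate and seems to really need the K3-specific input, namely the interplay between the polarized degeneration, the monodromy filtration, and Matsusaka--Mumford style uniqueness of minimal models. The other implications are more formal once one has set up the base change and flop machinery.
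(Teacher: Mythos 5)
First, a point of context: the paper does not prove this theorem at all --- it is quoted verbatim as Theorem 1.3 of \cite{LM18}, and the only place the argument is revisited is in the proof of Theorem \ref{theo: LM18 main sep}, where the authors say the arguments of \cite{LM18} go through ``word-for-word'' apart from a characteristic-$2$ issue with flops. So what you are really being asked to reconstruct is the \cite{LM18} proof, and your overall architecture (smooth proper base change for $(2)\Rightarrow(1)$; Kulikov model plus the Type I/II/III trichotomy read off from the monodromy for $(1)\Rightarrow(2)$; flops, a nef-and-big extension of the polarization, Artin contraction of $(-2)$-curves, and Galois descent of the resulting projective RDP model for $(2)\Rightarrow(3)$) is the correct shape of that proof.

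The genuine gap is exactly where you flag it, and it is not closed by ``a Galois descent on a chosen polarization combined with a limiting/spreading-out argument.'' The difficulty is that after the Type I conclusion one has a smooth model $\mathcal{X}/\mathcal{O}_L$ with $L/K_1$ \emph{totally ramified} Galois (where $K_1=L\cap K^{\rm nr}$), and $\mathcal{O}_L/\mathcal{O}_{K_1}$ is not \'etale, so no form of Galois/fpqc descent applies directly to the smooth model. The mechanism in \cite{LM18} (reprised in this paper as Proposition \ref{prop: extension}, resting on the cycle-class compatibility of Proposition \ref{prop: cyccomp}) has three steps you do not supply: (i) the $G=\mathrm{Gal}(L/K_1)$-action on $X_L$ extends to $\mathcal{X}$, using that the specialization of an ample class is $G$-invariant in $H^2$ together with a Matsusaka--Mumford argument; (ii) the induced $G$-action on the special fibre $Y=\mathcal{X}_k$ is \emph{trivial}, because $\mathrm{Aut}(Y)\rightarrow \mathrm{GL}(H^2(Y))$ is injective for K3 surfaces (\cite{Ogu79}, \cite{Keu16}) and $G$ acts trivially on $H^2$ by the unramifiedness hypothesis; (iii) one then descends not by descent theory but by deformation theory: unobstructedness of K3 deformations makes the classifying map $\spf{\mathcal{O}_L}\rightarrow \mathrm{Def}(Y)$ a $G$-invariant map, which therefore factors through $\spf{\mathcal{O}_L^G}=\spf{\mathcal{O}_{K_1}}$, and the resulting formal family is algebraized (as an algebraic space) to give a smooth model over $\mathcal{O}_{K_1}$. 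Step (ii) is the irreplaceable K3-specific input, and without (iii) there is no descent statement to invoke; as written, your $(1)\Rightarrow(2)$ stops at good reduction over the possibly ramified $L$. The other two implications are essentially correct as you describe them.
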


\begin{remarku} There exist examples of K3 surfaces over $\Q_p$ with $p\geq5$ that only admit good reduction over a non-trivial unramified extension, see \cite[Theorem 1.6]{LM18}. We will show how to modify these to produce similar examples over Laurent series fields $\F_p\lser{t}$  in \S\ref{sec: counter} below.
\end{remarku}

This begs the question of whether there exists a criterion that sees good reduction `on the nose'. This was the question investigated in \cite{CLL17} and to explain the answer, we note that any K3 surface $X/K$ satisfying $(1)$, $(2)$ and $(3)$ in Theorem \ref{theo: LM18 main} has a `canonical reduction' $Y$, which is a K3 surface over $k$. Concretely, this is given as the minimal resolution of singularities of the special fibre of any RDP model $\mathcal{X}$ as in $(3)$.
We note that $Y$ is unique up to canonical isomorphism and does not depend on the choice of model. The main result (Theorem 1.6) of \cite{CLL17} is then the following.

\begin{theoremn}[Chiarellotto--Lazda--Liedtke] \label{theo: CLL17 main}
Assume that $\mathrm{char}(K)=0$, let $X$ be a K3 surface over $K$, satisfying Assumption $(\star)$ and let $\ell$ be a prime $\neq \mathrm{char}(k)$. Then, then following are equivalent:
\begin{enumerate}
\item $X$ has good reduction over $K$;
\item \label{cond: NOS} the $G_K$-representation $H^2_\et(X_{K^{\rm s}},\Q_\ell)$ is unramified, and there is an isomorphism
\[ H^2_\et(X_{K^{\rm s}},\Q_\ell) \overset{\cong}{\longrightarrow} H^2_\et(Y_{\bar k},\Q_\ell)\]
of $G_k$-representations.
\end{enumerate}
\end{theoremn}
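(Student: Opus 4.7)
The direction $(1)\Rightarrow(2)$ is immediate from smooth and proper base change for $\ell$-adic cohomology: if $\cal{X}/\cal{O}_K$ is a smooth proper model, then the $G_K$-action on $H^2_\et(X_{K^{\rm s}},\Q_\ell)$ factors through $G_k$ and the specialisation isomorphism $H^2_\et(X_{K^{\rm s}},\Q_\ell)\cong H^2_\et(\cal{X}_{\bar k},\Q_\ell)$ is $G_k$-equivariant, where $\cal{X}_k = Y$ is by construction the canonical reduction.

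For $(2)\Rightarrow(1)$ my plan is Galois descent from a finite unramified extension. Applying Theorem~\ref{theo: LM18 main} to the unramifiedness hypothesis produces a finite unramified extension $L/K$, with residue extension $k'/k$, and a smooth proper model $\cal{X}_L\to\spec{\cal{O}_L}$ of $X_L$. Set $G:=\mathrm{Gal}(L/K)\cong\mathrm{Gal}(k'/k)$. Since both $\cal{X}_L$ and $\sigma^{*}\cal{X}_L$ are smooth proper models of $X_L$ (the latter carrying the $\sigma$-twisted $\cal{O}_L$-structure), they are abstractly isomorphic, and two such isomorphisms differ by an element of $\mathrm{Aut}(Y_{k'})$, where $Y_{k'}$ is their common special fibre. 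Picking for each $\sigma\in G$ the unique isomorphism whose generic fibre is the tautological $\sigma$-action on $X_L$ determines a cocycle $\{\alpha_\sigma\}\in Z^1(G,\mathrm{Aut}(Y_{k'}))$, and its class in $H^1(G,\mathrm{Aut}(Y_{k'}))$ is the precise obstruction to descending $\cal{X}_L$ to a smooth model of $X$ over $\cal{O}_K$.

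To kill this class I would invoke the Torelli theorem for K3 surfaces in characteristic $p$ (Ogus in the supersingular case, together with its refinements for finite-height K3s): a polarised automorphism of $Y_{k'}$ is determined by its action on $H^2_\et(Y_{k'},\Q_\ell)$ preserving cup product and an ample class. The hypothesis $(2)$, combined with the $G_{k'}$-equivariant specialisation isomorphism coming from $\cal{X}_L$, says that the $G$-action on $H^2_\et(Y_{k'},\Q_\ell)$ induced by the cocycle $\{\alpha_\sigma\}$ coincides with the intrinsic $G$-action inherited from the fact that $Y$ is already defined over $k$. Hence each $\alpha_\sigma$ acts trivially on $\ell$-adic cohomology, and Torelli forces $\alpha_\sigma=\iden$ once a $G$-invariant polarisation on $Y$ has been chosen (after possibly enlarging $L$ to make such a polarisation rational). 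The cocycle is then trivial, descent yields an algebraic-space model $\cal{X}/\cal{O}_K$, and smoothness of $\cal{X}$ is inherited from $\cal{X}_L$ by faithfully flat descent.

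The principal obstacle I anticipate is the Torelli step. Applying the available Torelli theorems requires matching polarisations, so one has to arrange a $G$-invariant ample class on $Y$ and verify that $\{\alpha_\sigma\}$ can be adjusted to preserve it; more delicately, one must check that the cohomological image of the descent cocycle agrees on the nose with the intrinsic $G_k$-action on $H^2_\et(Y_{\bar k},\Q_\ell)$, rather than differing by some inner twist coming from the choice of isomorphism $\cal{X}_{L,\bar k}\cong Y_{\bar k}$. A secondary point is the systematic use of algebraic spaces throughout: the descended K3 model need not be projective, which is exactly the reason the definition of a model admits algebraic spaces in the first place.
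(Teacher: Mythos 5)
Your direction $(1)\Rightarrow(2)$ is fine and matches the standard argument. The problem is in $(2)\Rightarrow(1)$, where you have misplaced the obstruction to descent. You assume that for each $\sigma\in G=\mathrm{Gal}(L/K)$ there exists an isomorphism $\mathcal{X}_L\isomto\sigma^*\mathcal{X}_L$ whose generic fibre is the tautological $\sigma$-action, and then treat the resulting class in $H^1(G,\mathrm{Aut}(Y_{k'}))$ as the obstruction. But such an extension of $\sigma$ to the model is exactly what may fail to exist: smooth models of a K3 surface over a DVR are \emph{not} unique as models (they form a torsor under the Weyl group of the exceptional root lattice of an RDP model, with different models related by flops rather than isomorphisms), so $\sigma^*\mathcal{X}_L$ may be a genuinely different smooth model of $X_L$. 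Conversely, if the extensions $\tilde\sigma$ did all exist, they would be unique (a birational map of smooth models extending the identity on generic fibres is unique if it exists), the cocycle condition would be automatic, and \'etale descent for algebraic spaces would finish the proof with no cohomological input at all --- i.e.\ your argument would show that unramifiedness of $H^2_\et(X_{K^{\rm s}},\Q_\ell)$ alone implies good reduction over $K$. This is false: the examples of \cite[Theorem 1.6]{LM18} (and Theorem \ref{theo: counter example} of this paper) are K3 surfaces with unramified $H^2$ and good reduction over an unramified extension but not over the base field. So the step ``picking for each $\sigma$ the unique isomorphism whose generic fibre is the tautological $\sigma$-action'' is the gap, not a routine choice.

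The actual proof (this paper only quotes \cite[Theorem 1.6]{CLL17}; the structure of the argument is visible in the equicharacteristic adaptation in \S\ref{sec: recap}) uses the second half of hypothesis $(2)$ precisely to resolve this ambiguity of models. Starting from an RDP model over $\mathcal{O}_K$ and its simultaneous resolution over $\mathcal{O}_L$, one compares the hypothesised isomorphism $H^2_\et(X_{K^{\rm s}},\Q_\ell)\cong H^2_\et(Y_{\bar k},\Q_\ell)$ with the specialisation isomorphism via the compatibility of cycle class maps (\cite[Lemma 8.1]{CLL17}, whose $p$-adic analogue here is Proposition \ref{prop: cyccomp}); this produces Galois-invariant algebraic classes matching the exceptional $(-2)$-curves, which allows one to perform the relevant flops over $\mathcal{O}_K$ itself (\cite[Proposition 7.5]{CLL17}) and hence to find a smooth model in the Weyl-group orbit to which the Galois action does extend. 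Only at the very end does one use the injectivity of $\mathrm{Aut}(Y)\to\mathrm{GL}(H^2)$ from \cite{Ogu79,Keu16} --- note that this is mere faithfulness of $H^2$ on automorphisms, not a Torelli theorem, so your appeal to Ogus-type Torelli results is both stronger than needed and not required in the generality you invoke. Your closing worries about matching polarisations and about the model being only an algebraic space are legitimate, but they are downstream of the main missing idea.
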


\begin{remarku} When $\mathrm{char}(k)=p>0$, there are also $p$-adic versions of both results in terms of crystalline cohomology, see \cite[Theorem 1.1 and Theorem 1.6]{CLL17}.\end{remarku}

\subsection*{Good reduction in positive equicharacteristic}
The classical $\ell$-adic good reduction criterion of Serre--Tate for abelian varieties \cite{ST68} makes no restriction on the generic characteristic of the base. Neither does its $p$-adic analogue \cite[Exp. IX, Thm. 5.13]{SGA7i}, phrased in terms of the $p$-divisible group associated to an abelian variety. The purpose of this note is therefore to explain how to extend the above results on good reduction of K3 surfaces to the case when $\mathrm{char}(K)=p>0$, both in terms of $\ell$-adic and $p$-adic cohomology. As we already observed in \cite{CLL17}, essentially everything in the $\ell$-adic case from \cite{LM18,CLL17} goes through if Assumption ($\star$) is strengthened to require the given finite extension $L/K$ to be \emph{separable} - this is Assumption ($\star^{\rm s}$) in \S\ref{sec: recap}. The key point, then, is to deal with the problem of descending good reduction over a finite and purely inseparable extension $L/K$. Our main result in this direction is the following.

\begin{theoremu}[Corollary \ref{cor: gr desc}] Let $L/K$ be a finite and purely inseparable extension and $X/K$ a K3 surface. Then, $X$ has good reduction over $K$ if and only if it has good reduction over $L$.
\end{theoremu}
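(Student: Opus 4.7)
The $(\Rightarrow)$ direction is immediate by base change, so I focus on the converse. Given a smooth proper model $\mathcal{Y}/\mathcal{O}_L$ of $X_L$, the plan is to construct $\mathcal{X}/\mathcal{O}_K$ by fppf descent along the finite flat cover $\mathcal{O}_K\hookrightarrow\mathcal{O}_L$. An easy induction on $[L:K]$ reduces us to the case $[L:K]=p$. Since $k$ is perfect, uniformizers may be chosen so that $\mathcal{O}_L=\mathcal{O}_K[\pi_L]/(\pi_L^p-\pi_K)$, and a direct computation then yields
\[
\mathcal{O}_L\otimes_{\mathcal{O}_K}\mathcal{O}_L\;\cong\;\mathcal{O}_L[\epsilon]/(\epsilon^p),\qquad \epsilon\;=\;1\otimes\pi_L-\pi_L\otimes 1,
\]
exhibiting $\spec{\mathcal{O}_L\otimes_{\mathcal{O}_K}\mathcal{O}_L}$ as a $(p-1)$-step iterated square-zero thickening of the diagonal $\spec{\mathcal{O}_L}$.

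The descent datum I need to produce is an isomorphism $\phi\colon p_1^*\mathcal{Y}\isomto p_2^*\mathcal{Y}$ of $\mathcal{O}_L\otimes_{\mathcal{O}_K}\mathcal{O}_L$-algebraic spaces satisfying the usual cocycle over $\mathcal{O}_L^{\otimes 3}$. Two values of $\phi$ are tautologically forced on me: on the generic fibre $L\otimes_K L$ both pullbacks coincide with $X\otimes_K(L\otimes_K L)$, pinning down $\phi_\eta=\iden$; and modulo $\epsilon$ both pullbacks reduce to $\mathcal{Y}$, pinning down $\phi_0=\iden_{\mathcal{Y}}$ on the diagonal. The heart of the argument will be to extend $\phi_0$ along the filtration by powers of $(\epsilon)$, matching $\phi_\eta$ on the generic fibre.

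At each step, extending an iso $\phi_n$ over $\mathcal{O}_L[\epsilon]/\epsilon^{n+1}$ to $\phi_{n+1}$ over $\mathcal{O}_L[\epsilon]/\epsilon^{n+2}$ is a standard deformation problem: the ambiguity lies in $H^0(\mathcal{Y},T_{\mathcal{Y}/\mathcal{O}_L})$ and the obstruction in $H^1(\mathcal{Y},T_{\mathcal{Y}/\mathcal{O}_L})$. The decisive input is that K3 surfaces have no non-trivial vector fields in any characteristic (indeed $T_X\cong\Omega^1_X$ since $\omega_X=\mathcal{O}_X$, and $h^{1,0}(X)=0$), so $H^0(T)$ vanishes on every fibre and lifts are unique. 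Because the fibrewise dimensions $h^0=h^2=0$ and $h^1=20$ are locally constant, cohomology-and-base-change shows that $R^1\pi_*T_{\mathcal{Y}/\mathcal{O}_L}$ is a locally free $\mathcal{O}_L$-module of rank $20$, hence torsion-free; since the generic extension $\phi_\eta$ exists to all orders, the obstruction class vanishes after inverting $\pi_L$ and therefore vanishes integrally. The inductive construction then produces $\phi$, and the same uniqueness argument applied on $\mathcal{O}_L^{\otimes 3}$ forces the cocycle condition (both sides of the cocycle are extensions of the trivially cocyclic $\phi_\eta$). Finally, fppf descent of smooth proper algebraic spaces along the finite flat cover $\mathcal{O}_K\to\mathcal{O}_L$ delivers the desired $\mathcal{X}/\mathcal{O}_K$, with smoothness and properness descending from $\mathcal{Y}$. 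The hard part, conceptually, is precisely the vanishing of $H^0(T)$: without it the extensions of $\phi_0$ across the successive square-zero layers $(\epsilon^{n+1})/(\epsilon^{n+2})$ would carry a genuine ambiguity, the cocycle could not be enforced from the generic fibre alone, and purely inseparable descent of good reduction could (and in general does) fail — which is exactly why the paper's abstract isolates "no non-trivial vector fields" as the governing hypothesis.
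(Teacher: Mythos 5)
Your proposal is correct and follows essentially the same route as the paper: the paper's Proposition \ref{prop: extending descent} axiomatizes exactly your mechanism (deformation theory along the powers of the ideal of the diagonal in $S'\times_S S'$, with $f_*\mathcal{T}_{Y/S'}=0$ forcing uniqueness of lifts and torsion-freeness of $\mathbf{R}^1f_*\mathcal{T}_{Y/S'}$ transporting the vanishing of obstructions from the generic fibre), followed by effectivity of descent for algebraic spaces; your explicit presentation $\mathcal{O}_L\otimes_{\mathcal{O}_K}\mathcal{O}_L\cong\mathcal{O}_L[\epsilon]/(\epsilon^p)$ is just a concrete instance of the paper's general nilpotent-diagonal setup. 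Your cohomology-and-base-change verification that $\mathbf{R}^1\pi_*\mathcal{T}_{\mathcal{Y}/\mathcal{O}_L}$ is locally free is a detail the paper leaves implicit in Corollary \ref{cor: gr desc}, and is a welcome addition.
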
 

In fact, the main property of K3 surfaces needed is that they do not admit non-zero global vector fields. 
Thus, we actually prove a more general statement about purely inseparable descent of models (Theorem \ref{thm: gr desc}) that may be of independent interest.
This result already gives the desired extensions of the above results to $\ell$-adic cohomology.

\begin{corollaryu} Theorems \ref{theo: LM18 main} and \ref{theo: CLL17 main} hold without any restriction on $\mathrm{char}(K)$.
\end{corollaryu}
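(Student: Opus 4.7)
The plan is to reduce both theorems to their separable-extension versions (which, as noted in the introduction, carry over to equicharacteristic $p$ under $(\star^{\mathrm s})$ by reading off the arguments of \cite{LM18,CLL17}), with Corollary \ref{cor: gr desc} supplying the only genuinely new input. The key manoeuvre is to upgrade $(\star)$ to $(\star^{\mathrm s})$ for the given K3 surface $X/K$; once this is achieved, the separable versions apply to $X/K$ directly.

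To perform the upgrade, let $\mathcal X/\mathcal O_L$ be a Kulikov model witnessing $(\star)$, with $L/K$ finite and possibly inseparable, and assume in addition that $G_K$ acts unramifiedly on $H^2_{\mathrm{\acute et}}(X_{K^{\mathrm s}},\Q_\ell)$. Then $X_L/L$ satisfies $(\star^{\mathrm s})$ tautologically (take the identity extension $L/L$), and $G_L$ still acts unramifiedly. The separable version of Theorem \ref{theo: LM18 main} applied to $X_L/L$ therefore produces a finite unramified extension $L''/L$ over which $X_L$ has good reduction. Writing $L''\supseteq L''_{\mathrm s}\supseteq K$ with $L''_{\mathrm s}/K$ the maximal separable subextension, so that $L''/L''_{\mathrm s}$ is purely inseparable, Corollary \ref{cor: gr desc} descends the good reduction from $L''$ to $L''_{\mathrm s}$. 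The resulting smooth model of $X_{L''_{\mathrm s}}/L''_{\mathrm s}$ is a Kulikov model (for a smooth proper family of K3 surfaces, the special fibre is again a K3 and the relative dualizing sheaf is trivial), so $X/K$ itself satisfies $(\star^{\mathrm s})$ via $L''_{\mathrm s}$.

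With this upgrade in hand, both theorems fall out of their separable versions applied to $X/K$. For Theorem \ref{theo: LM18 main}: $(2)\Rightarrow(1)$ is smooth and proper base change; $(1)\Rightarrow(2)$ is the upgrade under hypothesis $(1)$ followed by the separable case; and $(2)\Rightarrow(3)$ is immediate because $(2)$ itself supplies $(\star^{\mathrm s})$ via its unramified extension, after which the separable case yields an RDP model over $\mathcal O_K$. For Theorem \ref{theo: CLL17 main}: $(1)\Rightarrow(2)$ is smooth and proper base change applied to the smooth model over $\mathcal O_K$ (identifying the canonical reduction $Y$ with the special fibre); conversely, $(2)\Rightarrow(1)$ invokes the upgrade (its unramified-Galois hypothesis being supplied by the unramified part of $(2)$) and then applies the separable version of Theorem \ref{theo: CLL17 main} to $X/K$.

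The whole argument is concentrated in the upgrade step, where the obstacle is the absence of any direct route from the Kulikov model $\mathcal X/\mathcal O_L$ to one over a \emph{separable} extension of $K$: the good reduction produced by applying the separable case of Theorem \ref{theo: LM18 main} to $X_L/L$ a priori lives over a field whose extension of $K$ still carries an inseparable component, and Corollary \ref{cor: gr desc} is exactly what is needed to trade this inseparable ramification for descent of good reduction. Once this is in place, no further characteristic-$p$-specific work is required.
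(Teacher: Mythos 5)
Your proposal is correct and follows essentially the same route as the paper: under the relevant cohomological hypothesis, upgrade Assumption $(\star)$ to Assumption $(\star^{\rm s})$ by first obtaining good reduction over some finite extension and then using Corollary \ref{cor: gr desc} to descend it across the purely inseparable part down to the maximal separable subextension, after which the separable-case versions of both theorems apply. The only cosmetic difference is that the paper gets good reduction of $X_L$ by citing the equicharacteristic analogue of Matsumoto's theorem directly, whereas you invoke the separable version of Theorem \ref{theo: LM18 main} applied to $X_L/L$; these amount to the same input.
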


In order to provide $p$-adic versions of these results in equicharacteristic $p>0$, we need to use the Robba ring valued version of rigid cohomology, as was developped in reasonable generality in \cite{LP16}.\footnote{That this cohomology theory can be used to rephrase the `classical' $p$-adic criterion of Grothendieck for abelian varieties is explained in \cite[\S5.3]{LP16}} Let $W=W(k)$ denote the ring of Witt vectors of $k$, write $\kappa=W[1/p]$ for its fraction field, and $\mathcal{R}$ for the Robba ring over $\kappa$. Then for any variety $X/K$, there are rigid cohomology groups $H^i_\rig(X/\mathcal{R})$,
which are $\pn$-modules over $\mathcal{R}$, and behave completely analogously to the $\ell$-adic \'etale cohomology groups for $\ell\neq p$. In fact, since we will only be working with smooth and proper varieties it is possible to phrase everything purely in terms of crystalline cohomology, see \S\ref{sec: robba} for a detailed discussion. In the $p$-adic case in equicharacteristic, the analogue of Theorem \ref{theo: LM18 main}
is the following.

\begin{theoremn} \label{theo: LM18 main equi} Assume that $\mathrm{char}(K)=p>0$, and let $X$ be a K3 surface over $K$, satisfying Assumption $(\star)$. Consider the following statements:
\begin{enumerate}
\item for some prime (resp. all primes) $\ell\neq p$, the $G_K$-action on $H^2_\et(X_{K^{\rm s}},\Q_\ell)$ is unramified;
\item the $\pn$-module $H^2_\rig(X/\mathcal{R})$ admits a basis of horizontal sections;
\item $X$ has good reduction over a finite, unramified extension $L/K$;
\item $X$ admits a model $\mathcal{X}$ that is projective over $\mathcal{O}_K$, and whose special fibre is a K3 surface with at worst RDP singularities. 
\end{enumerate}
Then, we have $(1)\Leftrightarrow(2)\Leftrightarrow (3) \Rightarrow (4)$.
\end{theoremn}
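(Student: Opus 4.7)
My approach would be to prove the theorem in three stages, treating the $\ell$-adic equivalences first, then adding the $p$-adic condition (2).

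The preliminary step is to reduce Assumption $(\star)$ to its separable variant $(\star^{\rm s})$. Given a Kulikov model $\cal{X}$ of $X_L$ over $\cal{O}_L$, let $L^{\rm s}\subseteq L$ be the maximal separable subextension, so that $L/L^{\rm s}$ is finite and purely inseparable. Since $L/L^{\rm s}$ is in particular totally ramified, good reduction of $X_L$ descends to good reduction of $X_{L^{\rm s}}$ by Corollary \ref{cor: gr desc}, and a mild analysis of the Kulikov property along this descent (using triviality of the canonical divisor on $X$ and that K3 surfaces have no nonzero global vector fields, as in the proof of Theorem \ref{thm: gr desc}) shows that the model for $X_{L^{\rm s}}$ so obtained is again Kulikov. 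Thus we may assume throughout that $L/K$ is separable.

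With $(\star^{\rm s})$ in place, the $\ell$-adic implications $(1)\Leftrightarrow(3)\Rightarrow(4)$ follow by running the proof of Theorem \ref{theo: LM18 main} from \cite{LM18} verbatim. The direction $(3)\Rightarrow(1)$ is smooth and proper base change for étale cohomology; the direction $(1)\Rightarrow(3)$ uses the Kulikov classification to analyse the inertia action on nearby cycles and then, once inertia acts trivially, to show the model has type I; and $(3)\Rightarrow(4)$ uses the existence of a polarisation and Artin's contraction theorem to pass from a smooth proper model over an unramified extension to an RDP model over $\cal{O}_K$. As emphasised in \cite{CLL17}, none of these steps makes essential use of $\mathrm{char}(K)=0$ provided the extension $L/K$ is separable, which we have just arranged.

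It remains to prove $(2)\Leftrightarrow(3)$, which is the genuinely new $p$-adic content. The implication $(3)\Rightarrow(2)$ is a direct base change computation: if $\cal{X}/\cal{O}_L$ is a smooth and proper model with $L/K$ finite unramified and residue field $k'$, then the inclusion $W(k')[1/p]\hookrightarrow \cal{R}$ induces a canonical isomorphism
\[ H^2_\rig(X/\cal{R}) \;\cong\; H^2_\cris(\cal{X}_{k'}/W(k'))[1/p]\otimes_{W(k')[1/p]} \cal{R},\]
so $H^2_\rig(X/\cal{R})$ is a constant $\pn$-module and in particular admits a basis of horizontal sections. For the converse $(2)\Rightarrow(1)$, I would follow the template of \cite[\S5.3]{LP16}, where the horizontal triviality of the Robba-ring cohomology of an abelian variety is shown to be equivalent to the classical Grothendieck $p$-adic criterion, which in turn implies unramifiedness of the $\ell$-adic Tate module. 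Here the argument is made cleaner by Assumption $(\star^{\rm s})$, which furnishes a log-smooth proper model over a separable extension; standard comparison between log-crystalline cohomology over $\cal{R}$ and nearby cycles identifies the monodromy operator on $H^2_\rig(X/\cal{R})$ with (a twist of) the logarithm of the unipotent part of the inertia action on $H^2_\et(X_{K^{\rm s}},\Q_\ell)$, so that vanishing of the former forces the inertia action to be trivial (using purity of weights on the weight-zero piece $H^2$ of a K3 surface, exactly as in the classical weight-monodromy setting).

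The main obstacle is this last implication $(2)\Rightarrow(1)$, which requires a precise comparison in equicharacteristic $p$ between the monodromy on $H^2_\rig(X/\cal{R})$ and the inertia action on $\ell$-adic cohomology; unlike in mixed characteristic, one cannot appeal to Fontaine-style functors, and one must instead work intrinsically with log-crystalline nearby cycles in the framework of \cite{LP16}, using the K3-specific purity of the weight filtration on $H^2$ to upgrade triviality of monodromy to outright unramifiedness.
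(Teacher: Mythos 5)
There are two genuine gaps in your proposal. First, your reduction from $(\star)$ to $(\star^{\rm s})$ applies Corollary \ref{cor: gr desc} to descend ``good reduction of $X_L$'' along $L/L^{\rm s}$ --- but Assumption $(\star)$ only provides a \emph{Kulikov} model over $\mathcal{O}_L$, whose special fibre may be a genuinely singular SNC divisor, so $X_L$ is not yet known to have good reduction. The descent machinery behind that corollary (Proposition \ref{prop: extending descent} and Theorem \ref{thm: gr desc}) is a deformation-theoretic argument that requires $\mathcal{Y}\rightarrow \spec{\mathcal{O}_L}$ to be \emph{smooth}, with $f_*\mathcal{T}_{\mathcal{Y}/\mathcal{O}_L}=0$ and $\mathbf{R}^1f_*\mathcal{T}_{\mathcal{Y}/\mathcal{O}_L}$ torsion free; it does not descend Kulikov models, and your proposed ``mild analysis of the Kulikov property along this descent'' has nothing to rest on. The correct order of operations is the reverse of yours: one first uses the cohomological hypothesis (each of (1), (2), (3) supplies one, and these also hold for $X_L$) together with \cite[Theorem 6.4]{CL16} to upgrade the Kulikov model over $L$ to a \emph{smooth} model over $\mathcal{O}_L$, and only then descends that smooth model along the purely inseparable part $L/K_1$ via Corollary \ref{cor: gr desc}, obtaining good reduction --- hence a fortiori a Kulikov model --- over the separable subextension $K_1$. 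This makes the reduction $(\star)\Rightarrow(\star^{\rm s})$ conditional on the cohomological hypotheses, which is harmless since every implication to be proved starts from one of (1)--(3).

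Second, your route to the $p$-adic statement goes through a direct comparison between the monodromy operator on $H^2_\rig(X/\mathcal{R})$ and the unipotent part of the inertia action on $H^2_\et(X_{K^{\rm s}},\Q_\ell)$; you flag this yourself as the main obstacle, and indeed no such $p$-adic/$\ell$-adic independence statement is established or needed. The argument instead proves $(2)\Leftrightarrow(3)$ entirely on the $p$-adic side (Theorem \ref{theo: LM18 main padic}): $(3)\Rightarrow(2)$ follows from Proposition \ref{prop: spbc} together with the fact that horizontal triviality of a $\pn$-module over $\mathcal{R}$ may be checked after a finite extension of $\kappa$, while $(2)\Rightarrow(3)$ is obtained by rerunning the geometric argument of \cite{LM18} with $H^2_\rig(-/\mathcal{R}_L)$ in place of $\ell$-adic cohomology, the key inputs being the cycle-class compatibility of Proposition \ref{prop: cyccomp} and the group-action extension statement of Proposition \ref{prop: extension} (the analogue of \cite[Proposition 5.8]{LM18}), together with faithfulness of $\mathrm{Aut}(Y)\rightarrow \mathrm{GL}(H^2_\rig(Y/\kappa))$ for K3 surfaces over $k$. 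Conditions $(1)$ and $(2)$ are thus linked only through $(3)$, never directly. Your $\ell$-adic implications $(1)\Leftrightarrow(3)\Rightarrow(4)$ under $(\star^{\rm s})$ do match Theorem \ref{theo: LM18 main sep}, though note the one place where \cite{LM18} does not carry over verbatim: the existence of flops when $\mathrm{char}(K)=2$ requires the additional argument that $H_1(x,y)\neq 0$ in the local equation of the deformed rational double point.
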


Similarly, the analogue of Theorem \ref{theo: CLL17 main} is the following.

\begin{theoremn} \label{theo: CLL17 main equi}
Assume that $\mathrm{char}(K)=p>0$, and let $X$ be a K3 surface over $K$, satisfying Assumption $(\star)$. Then then following are equivalent:
\begin{enumerate}
\item $X$ has good reduction over $K$;
\item for some prime (resp. all primes) $\ell \neq p$, the $G_K$-representation $H^2_\et(X_{K^{\rm s}},\Q_\ell)$ is unramified, and there is an isomorphism
\[ H^2_\et(X_{K^{\rm s}},\Q_\ell) \overset{\cong}{\longrightarrow} H^2_\et(Y_{\bar k},\Q_\ell)\]
of $G_k$-representations.
\item the $\pn$-module $H^2_\rig(X/\mathcal{R})$ admits a basis of horizontal sections, and there is an isomorphism
\[ H^2_\rig(X/\mathcal{R})^{\nabla=0} \overset{\cong}{\longrightarrow} H^2_\mathrm{rig}(Y/\kappa) \]
of $F$-isocrystals over $\kappa$.
\end{enumerate}
\end{theoremn}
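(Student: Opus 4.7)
The plan is to follow the strategy of \cite{CLL17} in characteristic zero, using Theorem \ref{theo: LM18 main equi} in place of Theorem \ref{theo: LM18 main} and the Robba-ring machinery of \cite{LP16} (recalled in \S\ref{sec: robba}) to handle the $p$-adic statement.

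The implications $(1) \Rightarrow (2)$ and $(1) \Rightarrow (3)$ are the formal directions: given a smooth proper model $\mathcal{X}/\mathcal{O}_K$ with special fibre $Y$, smooth-and-proper base change yields $H^2_\et(X_{K^{\rm s}},\Q_\ell) \cong H^2_\et(Y_{\bar k},\Q_\ell)$ as $G_K$-representations with $I_K$ acting trivially, while the specialization isomorphism in rigid cohomology gives $H^2_\rig(X/\mathcal{R}) \cong H^2_\rig(Y/\kappa) \otimes_\kappa \mathcal{R}$ as $\pn$-modules, from which (2) and (3) respectively are immediate.

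For the converse direction, I would first invoke Theorem \ref{theo: LM18 main equi} --- using the unramifiedness in (2), or the horizontal basis condition in (3), as input --- to obtain good reduction of $X$ over some finite unramified extension $L/K$. Since $k$ is perfect, any such $L/K$ is automatically separable, so may be taken Galois with $\mathrm{Gal}(L/K) \cong \mathrm{Gal}(\ell/k)$, where $\ell$ is the residue field of $\mathcal{O}_L$. Writing $\mathcal{Y}_L/\mathcal{O}_L$ for the resulting smooth model of $X_L$, the residual task is to produce on $\mathcal{Y}_L$ a $\mathrm{Gal}(L/K)$-equivariant structure compatible with the canonical descent datum on the generic fibre $X_L$, and then to descend.

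The descent step would proceed as follows. For each $\sigma \in \mathrm{Gal}(L/K)$, the pullback $\sigma^\ast \mathcal{Y}_L$ is a smooth model of $\sigma^\ast X_L \cong X_L$; the goal is to produce an isomorphism $\sigma^\ast \mathcal{Y}_L \cong \mathcal{Y}_L$ over $\mathcal{O}_L$ extending this canonical identification on generic fibres. Because K3 surfaces have no non-zero global vector fields --- the same property driving Theorem \ref{thm: gr desc} --- any such extension is unique when it exists, so the cocycle condition is automatic and Galois descent for smooth proper algebraic spaces is effective, yielding a smooth model of $X$ over $\mathcal{O}_K$. The main obstacle I expect lies in producing these isomorphisms in the first place, which amounts to upgrading the cohomological hypothesis in (2) or (3) to a geometric $\mathrm{Gal}(\ell/k)$-equivariant identification of the special fibre $Y'_\ell$ of $\mathcal{Y}_L$ with $Y \otimes_k \ell$, where $Y/k$ is the canonical reduction. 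In the $\ell$-adic case (2) this uses a Torelli-type rigidity input for K3 surfaces over $k$, as in \cite{CLL17}; in the $p$-adic case (3) the $F$-isocrystal isomorphism encodes the same descent data, via the relationship between the Robba-ring valued rigid cohomology of $X/K$ and the crystalline cohomology of $Y/k$ developed in \cite{LP16} and \S\ref{sec: robba}.
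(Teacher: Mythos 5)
The forward direction and the overall shape of your converse (reduce, via Theorem \ref{theo: LM18 main equi}, to descending a smooth model from a finite unramified extension $L/K$) are fine, but the two places where the real work happens are not carried out, and one of them is sketched in a way that would not work. First, the descent step from $\mathcal{O}_L$ to $\mathcal{O}_K$ is the entire content of \cite[\S\S7--9]{CLL17}, which the paper imports wholesale in the theorem at the end of \S\ref{sec: recap}. Your plan is to ``upgrade'' the abstract isomorphism $H^2(X)\cong H^2(Y)$ to a Galois-equivariant identification of the special fibre of $\mathcal{Y}_L$ with $Y\otimes_k\ell$ via ``Torelli-type rigidity''. That identification of special fibres exists unconditionally ($Y$ is the canonical reduction, so the special fibre of any smooth model over $\mathcal{O}_L$ is $Y_\ell$ with its natural Galois action), and it is not where the obstruction lives: the counter-example of \S\ref{sec: counter} (Theorem \ref{theo: counter example}) is a K3 with good reduction over an unramified quadratic extension, hence with Galois-equivariantly identified special fibre, which nonetheless has no model over $K$ because $\sigma^*\mathcal{Y}_L$ and $\mathcal{Y}_L$ differ by a flop and no isomorphism extending the identity on generic fibres exists. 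No Torelli theorem over a general perfect field of characteristic $p$ is available to bridge this; what \cite{CLL17} actually does is parametrize the smooth models over $\mathcal{O}_L$ as small modifications of a fixed RDP model over $\mathcal{O}_K$, translate the existence of a descendable one into a condition on the specialization map on Picard groups (via the cycle-class compatibility, here Proposition \ref{prop: cyccomp}), and then prove by a lattice-theoretic argument that the \emph{abstract} isomorphism of Galois representations (resp.\ $F$-isocrystals) in (2)/(3) forces that condition. The rigidity input $\mathrm{Aut}(Y)\hookrightarrow\mathrm{GL}(H^2)$ is used only to kill the action on the special fibre, not to produce the isomorphism of models.

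Second, you never address the fact that Assumption $(\star)$ only provides a Kulikov model over a finite extension that may be \emph{inseparable}, whereas the whole \cite{CLL17} machinery (and the Galois-descent arguments inside it) requires a separable, hence Galois, extension. The paper's proof runs differently from yours precisely to handle this: it first proves purely inseparable descent of smooth models for varieties without vector fields (Theorem \ref{thm: gr desc}, Corollary \ref{cor: gr desc}), uses this in the Proposition of \S\ref{sec: together} to show that under hypothesis (2) or (3) Assumption $(\star)$ is equivalent to Assumption $(\star^{\rm s})$, and only then invokes the $(\star^{\rm s})$-version of the theorem established in \S\ref{sec: recap}. Your appeal to Theorem \ref{theo: LM18 main equi} hides part of this, but the remaining CLL17-style argument you need also requires $(\star^{\rm s})$ as input, so this reduction cannot be omitted.
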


Finally, there also exist examples similar to the ones of \cite[Theorem 1.6]{LM18} over Laurent series fields
that show that in general one \emph{cannot} expect to choose the finite and unramified extension $L/K$ from (3) 
of Theorem \ref{theo: LM18 main equi} to be trivial.

\begin{theoremu}[\ref{theo: counter example}]
For every prime $p\geq5$, there exists a smooth K3 surface over $\F_p\lser{t}$ 
  that admits good reduction over $\F_{p^2}\lser{t}$ but not over $\F_p\lser{t}$.
\end{theoremu}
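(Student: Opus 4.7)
The plan is to transport to the equicharacteristic setting the construction of \cite[Theorem 1.6]{LM18}, which produces K3 surfaces over $\Q_p$ having good reduction over $\Q_{p^2}$ but not over $\Q_p$ itself. The crucial observation is that that construction essentially only uses geometry over $\bar\F_p$ together with an unramified quadratic cocycle, data which are insensitive to whether the complete local base is $\Q_p$ or $\F_p\lser{t}$: both have residue field $\F_p$ and their maximal unramified extensions have isomorphic Galois groups $\hat\Z$.

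Concretely, following \cite{LM18} one chooses a K3 surface $Y_0/\F_{p^2}$ together with an order-two automorphism $\sigma$ subject to certain lattice-theoretic hypotheses, and forms the Galois twist $X/\F_p\lser{t}$ of $Y_0\otimes_{\F_{p^2}}\F_{p^2}\lser{t}$ by the non-trivial element of $\mathrm{Gal}(\F_{p^2}\lser{t}/\F_p\lser{t})$ acting via $\sigma$ on $Y_0$. By construction, $X\otimes_{\F_p\lser{t}}\F_{p^2}\lser{t}\cong Y_0\otimes_{\F_{p^2}}\F_{p^2}\lser{t}$, so $X$ has good reduction over $\F_{p^2}\lser{t}$ with special fibre $Y_0$. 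Since the twisting cocycle is unramified, the $G_K$-action on $H^2_\et(X_{K^{\rm s}},\Q_\ell)$ is automatically unramified.

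To rule out good reduction of $X$ over $\F_p\lser{t}$, I would apply Theorem \ref{theo: CLL17 main equi} (which, by the corollary to the main descent result, holds in equicharacteristic $p$): good reduction would force the $G_k$-representation $H^2_\et(X_{K^{\rm s}},\Q_\ell)$ to be isomorphic to the $\ell$-adic cohomology of some K3 surface over $\F_p$. Unwinding the twist, this $G_k$-action is given by composing the Frobenius on $H^2_\et(Y_{0,\bar k},\Q_\ell)$ with $\sigma^*$, and the lattice/Torelli-type argument of \cite{LM18} shows that for an appropriate choice of $(Y_0,\sigma)$ no K3 surface over $\F_p$ can realise this modified Frobenius action.

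The main obstacle is therefore verifying that the obstruction argument of \cite{LM18} -- originally formulated for K3 surfaces over $p$-adic fields -- transfers to the equicharacteristic setting. However, since that argument only involves comparing $G_k$-actions on the cohomology of K3 surfaces over $\bar\F_p$ and depends only on residual data, the transfer is essentially formal; one simply feeds the same geometric input $(Y_0,\sigma)$ used over $\Q_p$ into the twisting construction over $\F_p\lser{t}$ and invokes Theorem \ref{theo: CLL17 main equi} in place of its mixed-characteristic analogue from \cite{CLL17}.
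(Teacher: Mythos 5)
The construction you propose cannot produce a counter-example. If $Y_0$ is a K3 surface over $\F_{p^2}$ and $\sigma\in\mathrm{Aut}(Y_0)$ is an involution defined over $\F_{p^2}$, then the cocycle you twist by along $\F_{p^2}\lser{t}/\F_p\lser{t}$ is pulled back from a cocycle for $\mathrm{Gal}(\F_{p^2}/\F_p)$ with values in $\mathrm{Aut}(Y_0)$. Since K3 surfaces are projective, Galois descent along $\F_{p^2}/\F_p$ is effective, so the resulting surface is of the form $X_0\otimes_{\F_p}\F_p\lser{t}$ for a K3 surface $X_0$ over $\F_p$ (namely the twist of $Y_0$ over the finite fields), and $X_0\otimes_{\F_p}\F_p\pow{t}$ is then a smooth model: your $X$ automatically has good reduction over $\F_p\lser{t}$. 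In short, an unramified twist by residual data of a constant K3 surface is again constant, so no obstruction of the kind you invoke in the last two paragraphs can exist; indeed its canonical reduction is $X_0$ and condition (2) of Theorem \ref{theo: CLL17 main equi} holds trivially. The underlying misreading is of \cite[Theorem 1.6]{LM18}: the examples there are not Galois twists of surfaces with good reduction, but explicit quartics over $\Z_p$ whose special fibre is a K3 surface with rational double points, where the obstruction to good reduction is that the simultaneous (small) resolutions of the RDP model require extracting the square root of a non-square unit and are interchanged by the unramified quadratic Galois action.

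The paper's proof transports exactly that mechanism, and any correct proof must encode the failure of good reduction in data genuinely involving the uniformiser $t$, which a twist by residual data cannot do. Concretely, the paper takes $c\in\F_p^*\setminus(\F_p^*)^2$ and the quartic
\[ F= w(x^3+y^3+z^3+(t+a(1-t))w^3)+(tz^2+xy+tyz)^2-(c-t)t^2y^2z^2 \]
over $\F_p\pow{t}$, checks smoothness of the generic fibre as in \cite{LM18}, identifies the special fibre with that of the example $\mathcal{X}(p)$ of \cite{LM18} (a K3 surface with six RDPs), and observes that the resolving ideals $\mathcal{I}_{\pm}=(w,\,tz^2+xy+tyz\pm tyz\sqrt{c-t})$ exist only over $\F_{p^2}\pow{t}$ and define distinct, Galois-conjugate classes in the strictly local Picard group, which rules out a smooth model over $\F_p\pow{t}$. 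If you want to argue by twisting, the twist has to be performed on the level of the RDP model's resolutions (i.e.\ on $\sqrt{c-t}$), not on a smooth constant surface.
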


\subsection*{Organisation}
Let us now give a brief summary of the contents of this note. In \S\ref{sec: robba} we will give a brief overview of $p$-adic cohomology over positive characteristic local fields. In \S\ref{sec: recap} we will roughly outline the arguments from \cite{LM18} and \cite{CLL17}, and explain why they carry over to the equicharacteristic $p>0$ case under a suitable strengthening of Assumption $(\star)$ that we call Assumption ($\star^{\rm s}$). In \S\ref{sec: descent} we will discuss descent under purely inseparable fpqc covers, and prove our main result that smooth models of K3 surfaces descend under purely inseparable field extensions. In \S\ref{sec: together} we will bring everything together to prove our main results, Theorems \ref{theo: LM18 main equi} and \ref{theo: CLL17 main equi}. Finally, in \S\ref{sec: counter} we will construct counter-examples asserted by Theorem \ref{theo: counter example}.

\subsection*{Acknowledgements} We would like to thank Yuya Matsumoto for explaining to us why arithmetic threefold flops still exist in equicharacteristic $2$. The second named author is support by the Netherlands Organisation for Scientific Research (NWO). The third named author is supported by the ERC Consolidator Grant 681838 ``K3CRYSTAL''.

\section{Review of \texorpdfstring{$p$}{p}-adic cohomology in equicharacteristic \texorpdfstring{$p>0$}{pg0}}\label{sec: robba}

In this section we will briefly review the facts needed on $p$-adic cohomology when $\mathrm{char}(K)=p>0$. 
Thus, $\cal{O}_K$ is an excellent and Henselian DVR of characteristic $p>0$ with fraction field $K$ and perfect residue field $k$. We let $W=W(k)$ denote the ring of Witt vectors of $k$, $\kappa$ its fraction field, and $\sigma$ the $p$-power Frobenius on $W$ and $\kappa$.

\begin{remark} The choice of $\kappa$ for the fraction field of $W(k)$ is distinctly ``non-standard''. However, the more usual choices of $K_0$ or $K$ are unfortunately excluded by the use of $K$ for our given equicharacteristic local field.
\end{remark}

In this situation, a choice of uniformiser $\varpi\in\cal{O}_K$ induces an isomorphism $\widehat{K}\cong k\lser{\varpi}$ between the completion of $K$ and a Laurent series field over $k$. We will denote by $\mathcal{R}$ the Robba ring over $\kappa$, that is, the ring consisting of those Laurent series $\sum_i a_it^i$ with $a_i\in\kappa$
such that:
\begin{itemize} \item for all $\rho<1$, $\norm{a_i}\rho^i\rightarrow 0$ as $i\rightarrow \infty$;
\item for some $\eta<1$, $\norm{a_i}\eta^i\rightarrow 0$ as $i\rightarrow -\infty$.
\end{itemize}
In other words, $\mathcal{R}$ is the ring of functions convergent on some semi-open annulus $\eta\leq \norm{t}<1$. The ring of integral elements $\mathcal{R}^{\mathrm{int}}$ (that is, those with $a_i\in W$) is therefore a lift of $\widehat{K}$ to characteristic $0$, in the sense that mapping $t\mapsto \varpi$ induces an isomorphism $\mathcal{R}^{\mathrm{int}}/(p)\cong \widehat{K}$. We will denote by $\sigma$ a Frobenius on $\mathcal{R}$, that is, a continuous $\sigma$-linear endomorphism preserving $\mathcal{R}^{\mathrm{int}}$ and lifting the absolute $p$-power Frobenius on $\widehat{K}$.
We will moreover assume that $\sigma(t)=ut^p$ for some $u\in (W\pow{t}\otimes_W \kappa)^\times$. The reader is welcome to assume that $\sigma(\sum_i a_it^i)=\sum_i \sigma(a_i)t^{ip}$. Let $\partial_t:\mathcal{R}\rightarrow \mathcal{R}$ denote the derivation given by differentiation with respect to $t$.

\begin{definition} \label{pnm1} A $\pn$-module over $\mathcal{R}$ is a finite free $\mathcal{R}$-module $M$ together with:
\begin{itemize} \item a connection, that is, a $\kappa$-linear map $\nabla:M\rightarrow M$ such that
\[ \nabla(rm)=\partial_t(r)m+r\nabla(m)\;\;\;\;\text{for all}\;\; r\in \mathcal{R} \;\; \text{and} \;\;m\in M;\]
\item a horizontal Frobenius $\varphi:\sigma^*M:=M\otimes_{\mathcal{R},\sigma} \mathcal{R} \isomto M$.
\end{itemize}
\end{definition}

Then, $\pn$-modules over $\mathcal{R}$ should be considered as $p$-adic analogues of Galois representations. For example, they satisfy a local monodromy theorem, see \cite{Ked04a}. More specifically, the connection $\nabla$ should be viewed as an analogue of the action of the inertia subgroup $I_K$ and the Frobenius $\varphi$ the action of some Frobenius lift in $G_K$. The analogue for $\pn$-modules of a Galois representation being unramified is therefore the connection acting trivially, or, in other words, the $\pn$-module admitting a basis of horizontal sections.

The $p$-adic completion $\widehat{\mathcal{R}}^\mathrm{int}$ of the integral Robba ring is a Cohen ring for $\widehat{K}$ and hence, any smooth and proper $\widehat{K}$-variety $Y$ has crystalline cohomology groups
\[ H^i_\mathrm{cris}(Y/\widehat{\mathcal{R}}^\mathrm{int}), \]
which are $\pn$-modules over $\widehat{\mathcal{R}}^\mathrm{int}$, see for example \cite[\S.3.1]{GM87}. It follows from \cite[Theorem 7.0.1]{Ked00} (where the notations $\Gamma= \widehat{\mathcal{R}}^\mathrm{int}$ and $\Gamma^\dagger=\mathcal{R}^\mathrm{int}$ are used) that the crystalline cohomology groups of any smooth and proper variety $Y/\widehat{K}$ descend uniquely to $\pn$-modules
\[ H^i_\mathrm{cris}(Y/\mathcal{R}^\mathrm{int}) \]
over $\mathcal{R}^\mathrm{int}$. Therefore, we may define for any smooth and proper variety $X/K$
\[ H^i_\rig(X/\mathcal{R}) := H^i_\mathrm{cris}(X_{\widehat{K}}/\mathcal{R}^\mathrm{int})\otimes_{\mathcal{R}^\mathrm{int}} \mathcal{R} \]
as $\pn$-modules over $\mathcal{R}$. We have the following analogue of the smooth and proper base change theorem.

\begin{proposition} \label{prop: spbc} Let $\mathcal{X}\rightarrow \spec{\mathcal{O}_K}$ be a smooth and proper morphism of algebraic spaces, whose generic fibre is a scheme. Then, for any $i\geq 0$ there exists a canonical isomorphism
\[ H^i_\rig(\mathcal{X}_K/\mathcal{R})^{\nabla=0} \overset{\cong}{\longrightarrow} H^i_\mathrm{rig}(\mathcal{X}_k/\kappa), \]
of $F$-isocrystals over $K$.
\end{proposition}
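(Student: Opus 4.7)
The plan is to construct both sides of the isomorphism from a common third object: the relative crystalline cohomology of the $p$-adic completion of $\mathcal X$ over the formal base $\spf{W\pow t}$. Using the chosen uniformiser, identify $\widehat{\cal O_K}\cong k\pow\varpi$ and pick $t$ lifting $\varpi$, so that $W\pow t/(p)\cong k\pow\varpi$; equip $W\pow t$ with the PD structure on $(p)$ and the Frobenius $\sigma$ of the type specified in the excerpt. Writing $\widehat{\mathcal X}$ for the completion $\mathcal X\times_{\spec{\cal O_K}}\spf{\widehat{\cal O_K}}$, the relative crystalline cohomology
\[
\cal H := R^if_{\cris*}\cal O_{\widehat{\mathcal X}/W\pow t}
\]
is an $F$-crystal on $(\widehat{\cal O_K}/W\pow t)_\cris$. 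The strategy is to identify its two natural base changes with the two sides of the claim and then exploit the fact that $\cal H$ is globally defined on the whole disk to trivialise its connection on $\cal R$.

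The first step is the two base change identifications. Specialising $\cal H$ along $W\pow t\to W$, $t\mapsto 0$, crystalline smooth and proper base change yields $\cal H|_{t=0}\cong H^i_\cris(\mathcal X_k/W)$ and hence, after inverting $p$, the right-hand side $H^i_\rig(\mathcal X_k/\kappa)$. Base changing $\cal H$ instead along $W\pow t\to \widehat{\cal R}^\mathrm{int}$ yields $H^i_\cris(\mathcal X_{\widehat K}/\widehat{\cal R}^\mathrm{int})$; by the uniqueness of descent from $\widehat{\cal R}^\mathrm{int}$ to $\cal R^\mathrm{int}$ (Kedlaya's Theorem 7.0.1 cited above), this produces $H^i_\rig(\mathcal X_K/\mathcal R)$ after tensoring with $\cal R$. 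Compatibility with Frobenius comes from $\sigma(t)=ut^p$ reducing to the standard Frobenius on both $W$ and $\widehat{\cal R}^\mathrm{int}$.

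The second step is to trivialise the connection on the Robba ring. Locally on $\widehat{\mathcal X}$, a smooth proper lift $\mathfrak X/W\pow t$ exists (by smoothness and infinitesimal lifting), and the evaluation of $\cal H$ on such a lift is $H^i_\dR(\mathfrak X/W\pow t)$ equipped with its Gauss--Manin connection $\nabla$. Because $\mathfrak X$ extends across the entire closed disk $\spf{W\pow t}$, this connection has \emph{no} singularities on the open unit disk. The $p$-adic Cauchy existence theorem for $F$-isocrystals on the open unit disk (as in Christol--Mebkhout, or Kedlaya's book on $p$-adic differential equations) then implies that the restriction of $\cal H\otimes_{W\pow t}\cal R$ to $\cal R$ admits a canonical basis of horizontal sections, and that the evaluation at $t=0$ induces an $F$-equivariant isomorphism from $(\cal H\otimes_{W\pow t}\cal R)^{\nabla=0}$ to $\cal H|_{t=0}[1/p]$. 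Combining with the two base change identifications gives the claimed isomorphism $H^i_\rig(\mathcal X_K/\mathcal R)^{\nabla=0}\isomto H^i_\rig(\mathcal X_k/\kappa)$.

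The main obstacle will be the non-existence, in general, of a \emph{global} smooth proper lift $\mathfrak X$ of $\widehat{\mathcal X}$ over $W\pow t$ (obstructed by a class in $H^2(\widehat{\mathcal X}, T_{\widehat{\mathcal X}/\widehat{\cal O_K}})$); I would sidestep this by defining $\cal H$ intrinsically via the crystalline site, where only local lifts are needed, and then transfer the trivialisation of $\nabla$ via crystalline base change and \v{C}ech-type arguments on an affine cover. A secondary subtlety, when $\mathcal X$ is an algebraic space rather than a scheme, is that one needs the crystalline formalism for algebraic spaces; but since $f$ is smooth and proper and the generic fibre is a scheme, standard étale-local arguments reduce the cohomological computations to the scheme case without affecting the final identification.
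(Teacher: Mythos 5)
Your proof is correct in outline, but it takes a different (more self-contained) route than the paper, whose entire proof consists of observing that $H^i_{\mathrm{rig}}(\mathcal{X}_k/\kappa)$ agrees with the rational log-crystalline cohomology of $\mathcal{X}_k$ for the hollow log structure $\N\to\mathcal{O}_{\mathcal{X}_k}$, $1\mapsto 0$, and then citing \cite[Proposition 2.3]{CL16}. What you have written is essentially a direct proof of that cited comparison in the special case of a smooth (rather than merely semistable) model: spread out to a crystal $\mathcal{H}$ over $W\pow{t}$, identify the two base changes $t\mapsto 0$ and $W\pow{t}\to\widehat{\mathcal{R}}^{\mathrm{int}}$ with the two sides (using Kedlaya's uniqueness of descent to $\mathcal{R}^{\mathrm{int}}$ to match the paper's definition of $H^i_{\mathrm{rig}}(\mathcal{X}_K/\mathcal{R})$), and then trivialise the connection by Dwork's trick. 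Two points deserve more care than your sketch gives them. First, the phrase ``no singularities on the open unit disk'' is not by itself what makes the Cauchy theorem work $p$-adically: the horizontal sections a priori converge only on some small disk, and it is the Frobenius structure $\sigma(t)=ut^p$ (contracting the disk towards $0$) together with the fact that $\mathcal{H}$ is a \emph{convergent} isocrystal in Ogus's sense that pushes the radius of convergence to $1$; you do invoke the $F$-structure, but the convergence input (Ogus's theorem that $R^if_{\mathrm{cris}*}\mathcal{O}[1/p]$ is convergent for smooth proper $f$, or the local-lift argument you allude to) is the real content and should be made explicit. Second, $R^if_{\mathrm{cris}*}\mathcal{O}$ need not be a locally free crystal before inverting $p$, so the whole argument should be run at the level of isocrystals. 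Neither point is a gap in substance, and your approach has the advantage of avoiding log structures entirely; the paper's reduction to \cite{CL16} buys uniformity with the semistable case treated later (via \cite[Theorem 5.46]{LP16}), where the same disk-theoretic argument must be carried out with log poles and a monodromy operator.
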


\begin{proof}
Since $\mathcal{X}_k$ is smooth over $k$, the right hand side is isomorphic to the rational log-crystalline cohomology of $\mathcal{X}_k$, equipped with the log structure
\[ \N \rightarrow \mathcal{O}_{\mathcal{X}_k} ,\;\;\;\;1\mapsto 0 \]
so we may apply \cite[Proposition 2.3]{CL16}.
\end{proof}

We will also need cycle class maps in $p$-adic cohomology. For any smooth and proper variety $Y/\widehat{K}$, homomorphisms
\[ [-]:\mathrm{CH}^d(Y) \rightarrow H^{2d}_\mathrm{cris}(Y/\widehat{\mathcal{R}}^\mathrm{int})_\Q \]
are constructed in \cite[Theorem 4.3.1]{GM87}, and since their image lands in the subspace of horizontal sections on which Frobenius acts as $p^d$, it follows from Kedlaya's full faithfullness theorem \cite[Theorem 5.1]{Ked04c} that for any smooth and proper variety $Y/\widehat{K}$, the codimension-$d$ crystalline cycle class map for $Y$ actually takes values in $H^{2d}_\mathrm{cris}(Y/\mathcal{R}^\mathrm{int})_{\Q}^{\nabla=0,\varphi=p^d}$. This allows us to define, for any smooth and proper variety $X/K$, cycle class maps
\[ [-]:\mathrm{CH}^d(X) \rightarrow H^{2d}_\rig(X/\mathcal{R}) \]
landing in the subspace of horizontal sections on which Frobenius acts as $p^d$. On the other hand, for any smooth and proper variety $Y/k$, we also have crystalline cycle class map
\[ [-]:\mathrm{CH}^d(Y) \rightarrow H^{2d}_\rig(Y/\kappa). \]
The following is the $p$-adic analogue of \cite[Lemma 5.6]{LM18}, see also \cite[\S1]{CCM13}.

\begin{proposition} \label{prop: cyccomp} Let $\mathcal{X}\rightarrow \spec{\mathcal{O}_K}$ be a smooth and proper morphism of algebraic spaces, whose special and generic fibres are schemes. Then, for any closed subspace $\mathcal{Z}\hookrightarrow \mathcal{X}$ of constant codimension $d$ and flat over $\mathcal{O}_K$, the isomorphism
\[  H^{2d}_\rig(\mathcal{X}_K/\mathcal{R})^{\nabla=0} \overset{\cong}{\longrightarrow} H^{2d}_\mathrm{rig}(\mathcal{X}_k/\kappa) \]
sends $[\mathcal{Z}_K]$ to $[\mathcal{Z}_k]$.
\end{proposition}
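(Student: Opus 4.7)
The strategy parallels the argument for \cite[Lemma 5.6]{LM18} in the $\ell$-adic case: exhibit a ``relative'' cycle class for $\mathcal{Z}\subset \mathcal{X}$ that sits naturally inside the horizontal sections $H^{2d}_\rig(\mathcal{X}_K/\mathcal{R})^{\nabla=0}$ and restricts to $[\mathcal{Z}_K]$ and $[\mathcal{Z}_k]$ on the two fibres. Concretely, via the log-crystalline comparison used in the proof of Proposition \ref{prop: spbc}, both sides of the base change isomorphism are canonically identified with the rational log-crystalline cohomology of $\mathcal{X}_k$ equipped with the log structure $\N\to\mathcal{O}_{\mathcal{X}_k}$, $1\mapsto 0$, relative to the analogous log structure on $\widehat{\mathcal{R}}^{\mathrm{int}}$. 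The plan is to produce a log-crystalline cycle class $[\mathcal{Z}]^{\log}$ in this intermediate group and check that it matches $[\mathcal{Z}_K]$ under the comparison on the generic side, and $[\mathcal{Z}_k]$ after ``forgetting'' the log structure.

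I would first reduce to the codimension one case. By multiplicativity of the cycle class map, together with a Chow's moving lemma or de Jong alteration argument applied to $\mathcal{Z}\to\spec{\mathcal{O}_K}$, any codimension-$d$ cycle class is, up to rational equivalence, a $\Q$-linear combination of products of divisor classes. Since the base change isomorphism of Proposition \ref{prop: spbc} respects cup products (this is inherited from the ring structure on log-crystalline cohomology and the fact that the connection is a derivation, so horizontal sections form a subring), it suffices to treat $d=1$. In that situation $\mathcal{Z}$ is a relative effective Cartier divisor thanks to the flatness assumption, and defines a line bundle $\mathcal{O}_\mathcal{X}(\mathcal{Z})$ on $\mathcal{X}$ whose restrictions to the two fibres are precisely $\mathcal{O}_{\mathcal{X}_K}(\mathcal{Z}_K)$ and $\mathcal{O}_{\mathcal{X}_k}(\mathcal{Z}_k)$. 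The first Chern class $c_1(\mathcal{O}_\mathcal{X}(\mathcal{Z}))$ can be computed on a log-smooth lift of $\mathcal{X}_k$ to $\widehat{\mathcal{R}}^{\mathrm{int}}$ via the $d\!\log$ map $\mathcal{O}^\times \to \Omega^{1,\log}$, and is manifestly functorial with respect to pullback to each fibre.

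The main obstacle is not the cohomological machinery but the careful matching of three a priori distinct constructions: the Gros--Messing/Kedlaya cycle class on the generic fibre $\mathcal{X}_K$ (defined using the Robba-ring lift), the crystalline cycle class on $\mathcal{X}_k$ (defined over $W$), and the log-crystalline Chern class attached to $\mathcal{O}_\mathcal{X}(\mathcal{Z})$ that interpolates between the two. For $d=1$ this reduces to a direct check that each of the former two is computed by the $d\!\log$-construction under the relevant comparison isomorphism, which is a standard functoriality argument; once this divisor case is established, the reduction in the previous paragraph and the multiplicativity of the base change isomorphism with respect to cup products immediately give the general statement.
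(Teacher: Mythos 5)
Your overall strategy---interpolating between $[\mathcal{Z}_K]$ and $[\mathcal{Z}_k]$ by a class attached to the relative cycle $\mathcal{Z}$ on the model, and checking compatibility under the log-crystalline comparison---is the right idea and is also what the paper does. However, your reduction to the case $d=1$ contains a genuine gap. It is simply not true that an arbitrary codimension-$d$ cycle is, up to rational equivalence, a $\Q$-linear combination of products of divisor classes: the Chow ring (and even its image in cohomology) is not generated in degree one in general, and neither a moving lemma nor an alteration of $\mathcal{Z}$ produces such a decomposition. Moreover, even if one only cared about cohomology classes, you would need the decomposition to hold \emph{relatively} over $\mathcal{O}_K$, compatibly on both fibres, which is a further unaddressed issue. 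The correct way to ``reduce to divisors'' here is via the splitting principle inside $K$-theory: the Gros--Messing class is by definition $\frac{(-1)^{d-1}}{(d-1)!}c_d([\mathcal{O}_{\mathcal{Z}}])$, and Chern classes of $K$-theory classes are controlled by first Chern classes of line bundles only after pulling back to an auxiliary flag/projective bundle, which requires establishing the projective bundle formula in log-crystalline cohomology. This is exactly how the paper proceeds, following Berthelot--Illusie: it constructs total Chern class maps $c\colon K_0(\mathcal{Y})\to\bigoplus_n H^{2n}_{\log\text{-}\cris}$ and concludes by functoriality.

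There is a second, more technical point your plan does not engage with and which forces the paper to introduce a projective strictly semistable alteration $\mathcal{Y}\to\mathcal{X}$ (via Chow's lemma and de Jong): since $\mathcal{X}$ is only an algebraic space, it is not known that the class $[\mathcal{O}_{\mathcal{Z}}]$ of the perfect complex $\mathcal{O}_{\mathcal{Z}}$ can be represented in the $K$-theory of vector bundles on $\mathcal{X}$, so one cannot directly apply a Chern class map defined on $K_0$ of vector bundles. Pulling back to the regular projective scheme $\mathcal{Y}$ makes $\mathbf{L}\pi^*\mathcal{O}_{\mathcal{Z}}$ strictly perfect, and the induced maps on cohomology are injective by Poincar\'e duality, so nothing is lost. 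Your $d=1$ argument (a relative effective Cartier divisor gives a line bundle on $\mathcal{X}$ whose $d\!\log$-Chern class restricts correctly to both fibres) is essentially sound and would suffice for the application to Picard groups of K3 surfaces, but it does not prove the proposition as stated for general $d$.
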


\begin{proof} We may assume that $K=\widehat{K}$ is complete. By combining Chow's lemma with de\thinspace Jong's theorem on alterations \cite{dJ96} we may choose a finite extension $L/K$, a projective and strictly semistable scheme $\mathcal{Y}\rightarrow \spec{\mathcal{O}_L}$ and an alteration $\pi: \mathcal{Y}\rightarrow \mathcal{X}$ of algebraic spaces over $\mathcal{O}_K$. 

Let $k_L$ denote the residue field of $L$, $W_L=W(k_L)$ and fix a lift of $\mathcal{O}_K\rightarrow \mathcal{O}_L$ of the form $W\pow{t}\rightarrow W_L\pow{t_L}$. Let $\mathcal{R}_L$ denote a copy of the Robba ring over $\kappa_L=W_L[1/p]$ with parameter $t_L$, and $\mathcal{R}\rightarrow \mathcal{R}_L$ the induced finite flat extension. This sends $\mathcal{R}^\mathrm{int}$ into $\mathcal{R}_L^\mathrm{int}$. Let $\mathcal{Y}^\times$ denote the scheme $\cal{Y}$ equipped with the fs log structure induced by the special fibre, and $\mathcal{Y}_{k_L}^\times$ the special fibre of this log scheme. Let $W_{k_L}^\times$ denote the ring $W_{k_L}$ equipped with the log structure \begin{align*}
\N &\rightarrow W_{k_L} \\
1 &\mapsto 0
\end{align*}
and $W_L\pow{t_L}^\times$ for $W_L\pow{t_L}$ equipped with the log structure
\begin{align*}
\N &\rightarrow W_{k_L}\pow{t_L} \\
1 &\mapsto t_L.
\end{align*}
By \cite[Theorem 5.46]{LP16} there is, for all $i\geq0$, an isomorphism
\[ H^i_\rig(\mathcal{Y}_L/\mathcal{R}_L)^{\nabla=0} \overset{\cong}{\longrightarrow} H^i_{\log\text{-}\cris}(\mathcal{Y}_{k_L}^\times/W_L^\times)\otimes \Q, \]
fitting into a commutative diagram
\[ \xymatrix{ H^i_\rig(\mathcal{Y}_L/\mathcal{R}_L)^{\nabla=0} \ar[r]^-{\cong} &  H^i_{\log\text{-}\cris}(\mathcal{Y}_{k_L}^\times/W^\times_L)\otimes \Q \\ 
H^i_\rig(\mathcal{X}_K/\mathcal{R})^{\nabla=0} \ar[u] \ar[r]^-{\cong} & H^i_{\rig}(\mathcal{X}_k/\kappa). \ar[u]  } \]
Both vertical maps are injective by Poincar\'e duality (in the log crystalline case, see \cite{Tsu99a}). In particular, it suffices to verify that $[\mathcal{Z}_K]$ and $[\mathcal{Z}_k]$ map to the same element in $H^{2d}_{\log\text{-}\cris}(\mathcal{Y}_{k_L}^\times/W_L)\otimes \Q$.

For any algebraic space over $\mathcal{O}_K$, we will let $K(-)$ denote the $K$-theory of vector bundles. We therefore have total Chern class maps 
\begin{align*} c: K_0(\mathcal{X}_K) &\rightarrow \bigoplus_{n\geq0} H^{2n}_\cris(\mathcal{X}_K/\widehat{\mathcal{R}}^\mathrm{int}) \\ 
c: K_0(\mathcal{Y}_K) &\rightarrow \bigoplus_{n\geq0} H^{2n}_\cris(\mathcal{Y}_K/\widehat{\mathcal{R}}_L^\mathrm{int}) \\
c: K_0(\mathcal{X}_k) &\rightarrow \bigoplus_{n\geq0} H^{2n}_\cris(\mathcal{X}_k/W)
\end{align*}
taking values in crystalline cohomology, as constructed in \cite{BI70}. Using exactly the same method as in \cite{BI70} we can also construct total Chern class maps 
\begin{align*} c: K_0(\mathcal{Y}) &\rightarrow \bigoplus_{n\geq0} H^{2n}_{\log\text{-}\cris}(\mathcal{Y}^\times/W_L\pow{t_L}^\times) \\ 
c: K_0(\mathcal{Y}_{k_L}) &\rightarrow \bigoplus_{n\geq0} H^{2n}_{\log\text{-}\cris}(\mathcal{Y}^\times_{k_L}/W_L^\times)
\end{align*}
in log-crystalline cohomology. Indeed, if we have a $p$-adic log PD base $S$, (e.g. $W^\times$ or $W\pow{t}^\times$) and an fs log scheme $X$ equipped with a log smooth and proper morphism $X\rightarrow S$, then for any rank $r$ vector bundle $\mathcal{E}$ on $X$, we can endow its projectivisation $g:\P(\mathcal{E})\rightarrow X$ with the pullback log-structure from $X$. The first Chern class
\[ \xi:= c_1(\mathcal{O}_{\P(\mathcal{E})}(1))
\in H^2_{\log\text{-}\cris}(\P(\mathcal{E})/W)\]
can be defined directly, and in order to follow the contruction in \cite{BI70} we need to know that the cohomology $H^*_{\log\text{-}\cris}(\P(\mathcal{E})/W)$ decomposes as a direct sum
\[ H^*_{\log\text{-}\cris}(\P(\mathcal{E})/S) \cong \bigoplus_{i=0}^{r-1}H^*_{\log\text{-}\cris}(X/S)\cdot \xi^i.	 \]
We can turn this into a local statement, namely that the higher direct image $\mathbf{R}g_*\mathcal{O}^{\log\text{-}\cris}_{\P(\mathcal{E})/S}$ decomposes as
\[ \mathbf{R}g_*\mathcal{O}^{\log\text{-}\cris}_{\P(\mathcal{E})/S} \cong \bigoplus_{i=0}^{r-1}\mathcal{O}^{\log\text{-}\cris}_{X/S} \cdot \xi^i,  \]
where we have abused notation and also written $\xi\in H^0_{\log\text{-}\cris}(X/S,\mathbf{R}^2g_* \mathcal{O}^{\log\text{-}\cris}_{\P(\mathcal{E})/S} )$ for the image of the first Chern class of $\mathcal{O}_{\P(\mathcal{E})}(1)$ under the edge morphism of the Leray spectral sequence. Since this statement is now local, we can assume that in fact $\mathcal{E}=\mathcal{O}_{X}^{\oplus r}$, thus $\P(\mathcal{E})=\P^{r-1}_X$. In this case the claim follows from the smooth and proper base change theorem in log-crystalline cohomology \cite[Theorem 6.10]{Kat89}, together with the standard computation of the crystalline cohomology of projective space.

Now, since $\mathcal{Z}$ is flat, the fibres $\mathcal{Z}_K$ and $\mathcal{Z}_k$ are also of codimension $d$ in $\mathcal{X}_K$ and $\mathcal{X}_k$ respectively. Since $\mathcal{X}_K$ and $\mathcal{X}_k$ are both regular and  projective schemes, the $K$-theory of vector bundles coincides with that of coherent sheaves, so we have well-defined classes $[\mathcal{O}_{\mathcal{Z}_K}]\in K_0(\mathcal{X}_K)$ and $[\mathcal{O}_{\mathcal{Z}_k}]\in K_0(\mathcal{X}_k)$. Essentially by definition \cite[Theorem 4.3.1]{GM87}, we have
\[ [\mathcal{Z}_K]=\frac{(-1)^{d-1}}{(d-1)!}c_d([\mathcal{O}_{\mathcal{Z}_K}])  \mbox{ \quad and \quad } [\mathcal{Z}_k] =\frac{(-1)^{d-1}}{(d-1)!}c_d([\mathcal{O}_{\mathcal{Z}_k}]). \]
Since $\mathcal{O}_{\mathcal{Z}_K}$ and $\mathcal{O}_{Z_k}$ are strictly perfect complexes on $\mathcal{X}_K$ and $\mathcal{X}_k$ respectively, we get well-defined classes $[\mathbf{L}\pi^*\mathcal{O}_{\mathcal{Z}_K}]\in K_0(\mathcal{Y}_{L})$ and $[\mathbf{L}\pi^*\mathcal{O}_{\mathcal{Z}_k}]\in K_0(\mathcal{Y}_{k_L})$. By functoriality of Chern classes, it suffices to show that the isomorphism
\[ H^{2d}_\rig(\mathcal{Y}_L/\mathcal{R}_L)^{\nabla=0} \overset{\cong}{\longrightarrow} H^{2d}_{\log\text{-}\cris}(\mathcal{Y}_{k_L}^\times/W_L^\times)\otimes \Q \]
sends $c_d([\mathbf{L}\pi^*\mathcal{O}_{\mathcal{Z}_K}])$ to $c_d([\mathbf{L}\pi^*\mathcal{O}_{\mathcal{Z}_k}])$. Since $\mathcal{X}$ is regular we know that $\mathcal{O}_\mathcal{Z}$ is a perfect complex of $\mathcal{O}_\mathcal{X}$-modules, and since $\mathcal{Y}$ is a projective scheme, $\mathbf{L}\pi^*\mathcal{O}_\mathcal{Z}$ is actually a \emph{strictly} perfect complex on $\mathcal{Y}$. It therefore has a well defined class in $K_0(\mathcal{Y})$, which restricts to the class of $\mathbf{L}\pi^*\mathcal{O}_{\mathcal{Z}_K}$ (resp. $\mathbf{L}\pi^*\mathcal{O}_{\mathcal{Z}_k}$) on the generic (resp. special) fibre. Given the construction of the isomorphism
\[ H^{2d}_\rig(\mathcal{Y}_L/\mathcal{R}_L)^{\nabla=0} \overset{\cong}{\longrightarrow} H^{2d}_{\log\text{-}\cris}(\mathcal{Y}_{k_L}^\times/W_L^\times)\otimes \Q \]
it now simply suffices to note that the diagram
\[ \xymatrix{ K_0(\mathcal{Y}_K) \ar[d]^c & K_0(\mathcal{Y})\ar[d]^c \ar[l]  \ar[r] & K_0(\mathcal{Y}_k)\ar[d]^c \\ \bigoplus_{n\geq0} H^{2n}_{\cris}(\mathcal{Y}_L/\widehat{\mathcal{R}}_L^\mathrm{int}) &
\bigoplus_{n\geq0} H^{2n}_{\log\text{-}\cris}(\mathcal{Y}^\times/ W_L\pow{t_L}^\times) \ar[r]\ar[l] & \bigoplus_{n\geq0} H^{2n}_{\log\text{-}\cris}(\mathcal{Y}_{k_L}^\times/ W_L^\times)  } \]
commutes.
\end{proof}

\begin{remark} We need to argue on the alteration $\mathcal{Y}$ rather than on the model $\mathcal{X}$ itself because of the potential difference between the $K$-group of vector bundles (equivalently: strictly perfect complexes) and that of perfect complexes. They coincide on regular schemes (like $\mathcal{Y}$), but it is not known whether the same is true for algebraic spaces (like $\mathcal{X}$). If it were, the above proof could be significantly simplified.
\end{remark}

\section{Descending good reduction under separable extensions} \label{sec: recap}

In this section, we will explain how and why the main results of \cite{LM18} and \cite{CLL17} carry over in equicharacteristic $p$ under a strengthening of Assumption ($\star$), which we will call Assumption ($\star^{\rm s}$). Under this stronger assumption, we also establish a version of these results in terms of the Robba ring valued $p$-adic cohomology discussed in \S\ref{sec: robba}.

\begin{AssumptionStarSep}  
A K3 surface $X/K$ satisfies ($\star^{\rm s}$) if it satisfies Assumption ($\star$) and we can moreover take the finite extension $L$ to be separable over $K$.
\end{AssumptionStarSep}

Then we can use the arguments of \cite{LM18} word-for-word to prove the following.

\begin{theorem} \label{theo: LM18 main sep} Let $X$ be a K3 surface over $K$, satisfying Assumption $(\star^{\rm s})$, and let $\ell$ be a prime $\neq p$. Consider the following three statements:
\begin{enumerate}
\item \label{num: urQl} the $G_K$-action on $H^2_\et(X_{K^{\rm s}},\Q_\ell)$ is unramified;
\item \label{num: grux} $X$ has good reduction over a finite, unramified extension $L/K$;
\item \label{num: RDPmod} $X$ admits a model $\mathcal{X}$ that is projective over $\mathcal{O}_K$, and whose special fibre is a K3 surface with at worts RDP singularities. 
\end{enumerate}
Then, we have $(\ref{num: urQl})\Leftrightarrow(\ref{num: grux})\Rightarrow (\ref{num: RDPmod})$.
\end{theorem}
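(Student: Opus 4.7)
The plan is to run the proof of \cite[Theorem 1.3]{LM18} essentially verbatim, using Assumption $(\star^{\rm s})$ in place of $(\star)$ so that the auxiliary extension $L/K$ carrying the Kulikov model is \emph{separable}. This is precisely what is needed to make the standard Galois-descent, trace, and quotient constructions transport intact from the characteristic $0$ setting to the equicharacteristic $p$ setting.

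For $(\ref{num: grux})\Rightarrow(\ref{num: urQl})$, smooth and proper base change applied to the smooth proper model over an unramified $L/K$ shows that $I_L$ acts trivially on $H^2_\et(X_{L^{\rm s}},\Q_\ell)$; since $L/K$ is unramified we have $I_L = I_K$, and hence $I_K$ acts trivially as well.

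For the main implication $(\ref{num: urQl})\Rightarrow(\ref{num: grux})$, Assumption $(\star^{\rm s})$ supplies a Kulikov model $\mathcal{X}_L\rightarrow\spec{\mathcal{O}_L}$ over a finite separable extension $L/K$. The Clemens--Schmid / weight-spectral-sequence description of the $I_L$-action on $H^2_\et(X_{L^{\rm s}},\Q_\ell)$ in terms of the dual complex of the special fibre, combined with the Kulikov trichotomy (Types I, II, III), shows that the unramified hypothesis forces the monodromy log operator $N$ to vanish; hence the Kulikov model must be of Type I, i.e., $\mathcal{X}_L$ is smooth over $\mathcal{O}_L$. A descent argument, using uniqueness of smooth proper models of K3 surfaces together with the separability of $L/K$, then shows that one may shrink $L$ to its maximal unramified subextension over $K$, producing good reduction over a finite unramified extension. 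Finally, $(\ref{num: grux})\Rightarrow(\ref{num: RDPmod})$ is obtained by performing a $\mathrm{Gal}(L/K)$-equivariant Artin contraction of the exceptional $(-2)$-configurations on the smooth special fibre (using a Galois-invariant polarisation inherited from $X$) and then passing to the quotient by $\mathrm{Gal}(L/K)$; this yields a projective model $\mathcal{X}/\mathcal{O}_K$ whose special fibre is a K3 surface with at worst RDP singularities.

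The only place where positive equicharacteristic could create difficulties is in producing the Kulikov model and in the birational modifications used to pass between semistable and RDP form, but these are exactly the inputs that $(\star^{\rm s})$ is built to assert. Once the Kulikov model is provided, the remaining steps are either combinatorial (dual-graph and weight-filtration analysis, Kulikov classification) or Galois-theoretic along a separable extension, and neither is sensitive to the residue characteristic. The whole point of strengthening $(\star)$ to $(\star^{\rm s})$ is to keep $L/K$ separable so that Galois descent, fixed loci, and GIT quotients behave classically; this is the one and only modification required to translate the \cite{LM18} argument.
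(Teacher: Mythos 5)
Your overall strategy is exactly the paper's: declare that with $(\star^{\rm s})$ in place of $(\star)$ the extension $L/K$ carrying the Kulikov model is separable, and then run \cite[Theorem 1.3]{LM18} verbatim. However, your closing claim that separability is ``the one and only modification required'' and that the remaining steps are ``not sensitive to the residue characteristic'' is precisely where the paper has to do real work, and where your write-up has a genuine gap. The proof of \cite[Proposition 4.2]{LM18} --- the existence of flops on relative surfaces over $\mathcal{O}_K$ with numerically trivial relative canonical divisor --- constructs the flop using the non-trivial involution $z\mapsto H_1(x,y)-z$ of a deformation
\[ \spf{\frac{\mathcal{O}_K\pow{x,y,z}}{z^2-H_1(x,y)z-H_0(x,y)}} \]
of a rational double point. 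When $\mathrm{char}(K)=2$ this involution is the identity whenever $H_1=0$, so the argument breaks unless one shows $H_1\neq 0$. The paper supplies the missing argument (due to Matsumoto): if $H_1=0$, then the critical point $(0,0)$ of $H_0$ on the special fibre lifts, after a finite extension, to a solution $(\xi,\eta)$ of $\partial H_0/\partial x=\partial H_0/\partial y=0$ on the generic fibre, producing a singular point $(\xi,\eta,\sqrt{H_0(\xi,\eta)})$ and contradicting smoothness of the generic fibre. Without this, your proof does not cover $p=2$ (note the theorem places no restriction on $p$ beyond $\ell\neq p$).

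A related imprecision makes the gap harder to see in your text: smooth proper models of K3 surfaces over $\mathcal{O}_L$ are \emph{not} unique --- they are unique only up to flops --- so your appeal to ``uniqueness of smooth proper models'' in the descent step is not available. In \cite{LM18} the descent along the totally ramified part of $L/K$ works by extending the Galois action to the model, and the obstruction to doing so is resolved exactly by performing flops; this is why the existence of flops sits inside the main implication $(\ref{num: urQl})\Rightarrow(\ref{num: grux})$ rather than being a peripheral birational lemma, and why the characteristic~$2$ issue above must be addressed for the theorem as stated.
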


\begin{proof}
 The point is that the characteristic $0$ hypothesis is only used in \cite{LM18} in order to ensure that $X$ admits a Kulikov model over a Galois extension $L/K$, 
which enables arguments via Galois descent to be used. 
Thus, if we admit this from the start, the proofs in \cite{LM18} go through unchanged, with one important exception. 

This exception occurs in the proof of \cite[Proposition 4.2]{LM18}, showing the existence of flops on relative surfaces over $\mathcal{O}_K$ with numerically trivial relative canonical divisor. One step of the construction uses the fact that a deformation
\[ \spf{\frac{\mathcal{O}_K\pow{x,y,z}}{z^2-H_1(x,y)z-H_0(x,y)}}\]
of a rational double point singularity over $k$ admits a non-trivial involution
\[ z\mapsto H_1(x,y)-z.\]
This will clearly remain true provided $\mathrm{char}(K)\neq 2$, and to show that it also remains true when $\mathrm{char}(K)=2$, we need to explain why we will always have $H_1(x,y)\neq 0$. 

This was pointed out to us by Yuya Matsumoto. Indeed, suppose that we have $H_1(x,y) = 0$. Then, since the equation
\[ \frac{\partial H_0}{\partial x} = \frac{\partial H_0}{\partial y} =
0\]
has the solution $(x,y) = (0,0)$ on the special fibre, after possibly replacing $K$ by a finite extension, it also has a solution $(x,y) = (\xi, \eta)$ on the generic fibre. This would mean that the generic fibre has a singular point $(x,y,z) = (\xi, \eta, \sqrt{H_0(\xi,\eta)})$. Therefore smoothness of the generic fibre means that we must have $H_1(x,y)\neq 0$ after all.
\end{proof}

In order to have a version of this result also in terms of the Robba ring valued $p$-adic cohomology discussed in \S\ref{sec: robba}, the first step is to have an analogue of \cite[Theorem 1.1]{Mat15} in this cohomology theory. This is provided by \cite[Theorem 6.4]{CL16}. The next key point will be to show that smooth models descend under finite, \emph{totally ramified} extensions, under suitable cohomological assumptions. More precisely, we need to establish the analogue of \cite[Proposition 5.8]{LM18} in Robba-ring value rigid cohomology. 

Thus, let $X$ be a K3 surface over $K$. Suppose that there exists a totally ramified Galois extension $L/K$ with Galois group $G$ such that $X_L$ 
admits a smooth model $\mathcal{X}\rightarrow \spec{\mathcal{O}_L}$. Let $\mathcal{R}$ be the Robba ring over $\kappa$ and let $\mathcal{R}_L$ denote the unique unramified extension of $\mathcal{R}$ corresponding to $L/K$ as in \cite{Mat95}, see also the proof of Proposition \ref{prop: cyccomp}. Then, there is a natural $G$-action on the $\pn$-module $H^2_\rig(X_L/\mathcal{R}_L)$ and hence, on its subspace $H^2_\rig(X_L/\mathcal{R}_L)^{\nabla=0}$ 
of horizontal sections.

\begin{proposition} \label{prop: extension}
Under the previous assumptions, assume that the $G$-action on $H^2_\rig(X_L/\mathcal{R}_L)^{\nabla=0}$ is trivial.
 Then, the $G$-action on $X_L$ extends to $\mathcal{X}$ and the induced $G$-action on the special fibre $\mathcal{X}_{k}$ is trivial.
\end{proposition}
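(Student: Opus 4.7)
The plan is to mirror the strategy of \cite[Proposition 5.8]{LM18}, replacing $\ell$-adic \'etale cohomology by the Robba-ring valued rigid cohomology of \S\ref{sec: robba}. The argument naturally splits into three steps: (i) extend each $\sigma\in G$ to a $\sigma$-semilinear automorphism $\tilde\sigma$ of $\mathcal{X}$, obtaining a group homomorphism $G\to \mathrm{Aut}_{\mathcal{O}_K}(\mathcal{X})$; (ii) use the hypothesis together with the comparison isomorphism of Proposition \ref{prop: spbc} to deduce that the induced $G$-action on $H^2_\rig(\mathcal{X}_k/\kappa)$ is trivial; and (iii) conclude via the faithfulness of the action of $\mathrm{Aut}(\mathcal{X}_k)$ on $H^2_\rig(\mathcal{X}_k/\kappa)$ for the K3 surface $\mathcal{X}_k$.

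For step (i), I would argue that for each $\sigma\in G$ the twist $\sigma^*\mathcal{X}$ is again a smooth proper model of $X_L$ over $\mathcal{O}_L$, using that $\sigma:X_L\to X_L$ canonically identifies their generic fibres. Appealing to the uniqueness of smooth proper models of K3 surfaces over a DVR, as in \cite{Mat15}, then produces an $\mathcal{O}_L$-isomorphism $\sigma^*\mathcal{X}\overset{\sim}{\to}\mathcal{X}$ restricting to the identity on the generic fibre, which globally yields the desired lift $\tilde\sigma$. Separatedness of $\mathcal{X}$ and density of $X_L\subset\mathcal{X}$ guarantee uniqueness of this lift, and hence the homomorphism property $\widetilde{\sigma_1\sigma_2}=\tilde\sigma_1\tilde\sigma_2$.

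For step (ii), the comparison isomorphism of Proposition \ref{prop: spbc} is functorial in the smooth model, so it is $G$-equivariant with respect to the action of step (i) on the left-hand side and the induced action on the special fibre on the right-hand side; the hypothesis therefore propagates to triviality of the $G$-action on $H^2_\rig(\mathcal{X}_k/\kappa)$. For step (iii), I would appeal to the fact --- known in arbitrary characteristic thanks to work of Ogus and Rudakov--Shafarevich --- that an automorphism of a K3 surface over $k$ acting trivially on $H^2_\rig(-/\kappa)$ is necessarily the identity.

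The main obstacle is step (i): the standard uniqueness statements for smooth models of K3 surfaces are formulated for projective schemes, whereas $\mathcal{X}$ is a priori only an algebraic space over $\mathcal{O}_L$. I expect this is overcome by first extending an ample line bundle from $X_L$ to $\mathcal{X}$ --- forcing projectivity --- and then invoking the characteristic $p>0$ refinement of \cite[Proposition 4.2]{LM18} established in the proof of Theorem \ref{theo: LM18 main sep}, which guarantees the absence of flops between smooth relative threefolds with relatively trivial canonical bundle, even in characteristic $2$. A secondary point requiring care is the $G$-equivariance in step (ii), which relies on tracing through the explicit construction of the comparison isomorphism as a composition of functorial crystalline base-change maps.
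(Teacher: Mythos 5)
Your overall architecture (extend the action to the model, transport the hypothesis via the specialization isomorphism, conclude by faithfulness of $\mathrm{Aut}$ on $H^2$ of the special fibre) matches the paper's, and your steps (ii) and (iii) are essentially what the paper does --- the paper cites \cite[Corollary 2.5]{Ogu79} and \cite[Theorem 1.4]{Keu16} for the faithfulness statement. However, step (i) contains a genuine gap. Smooth proper models of a K3 surface over a DVR are \emph{not} unique: they are unique only up to flops in the special fibre. This is precisely the content of \cite[\S4]{LM18} (whose Proposition 4.2 establishes the \emph{existence} of flops, not their absence --- you have inverted its statement), and it is why the counterexamples of \cite[\S7]{LM18} and of \S\ref{sec: counter} of this paper can exist at all. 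So your appeal to ``uniqueness of smooth proper models, as in \cite{Mat15}'' only produces a birational map $\sigma^*\mathcal{X}\dashrightarrow\mathcal{X}$ which is an isomorphism in codimension one; it does not by itself yield the isomorphism extending $\sigma$.

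The missing ingredient is exactly where the cohomological hypothesis enters step (i), which your outline never uses there. To show the birational map is an isomorphism one argues as in \cite[Proposition 5.5]{LM18}: take an ample class on $\mathcal{X}$, and use the triviality of the $G$-action on $H^2_\rig(X_L/\mathcal{R}_L)^{\nabla=0}$ together with the compatibility of cycle class maps with the specialization isomorphism --- this is Proposition \ref{prop: cyccomp}, which the paper explicitly identifies as ``the key ingredient'' --- to conclude that $\sigma$ preserves the specialization of that ample class on the special fibre. A birational map of smooth models which is an isomorphism in codimension one and matches ample classes is an isomorphism, and this is what produces the lift $\tilde\sigma$. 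Without routing the hypothesis through Proposition \ref{prop: cyccomp} at this stage, the extension of the $G$-action to $\mathcal{X}$ is simply not established.
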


\begin{proof}
The key ingredient here is Proposition \ref{prop: cyccomp}. Indeed, given this we can copy the proof of \cite[Proposition 5.5]{LM18} word for word to show that the $G$-action on $X_L$ extends to $\mathcal{X}$. To show that the induced action on the special fibre is trivial, we use \cite[Corollary 2.5]{Ogu79} and \cite[Theorem 1.4]{Keu16}, which give injectivity of the natural map
\[\mathrm{Aut}(Y) \rightarrow \mathrm{GL}(H^2_\rig(Y/\kappa))  \]
for K3 surfaces over $k$.
\end{proof}

After this preparation, we arrive at the following $p$-adic version of Theorem \ref{theo: LM18 main sep}.

\begin{theorem} \label{theo: LM18 main padic} Assume that $\mathrm{char}(K)=p>0$, let $X$ be a K3 surface over $K$, satisfying  Assumption $(\star^{\rm s})$, and let $\ell$ be a prime $\neq p$. Then $(\ref{num: urQl})$ and $(\ref{num: grux})$ in Theorem \ref{theo: LM18 main sep} are both equivalent to the following:
\begin{enumerate}\item[$(1)'$] the $\pn$-module $H^2_\rig(X/\mathcal{R})$ admits a basis of horizontal sections. 
\end{enumerate}
\end{theorem}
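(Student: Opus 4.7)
The plan is to add $(1)'$ to the circle of equivalences by proving $(\ref{num: grux})\Rightarrow (1)' \Rightarrow (\ref{num: grux})$. The easy implication is $(\ref{num: grux})\Rightarrow (1)'$: if $\mathcal{X}\to \spec{\mathcal{O}_L}$ is a smooth model with $L/K$ finite unramified, then Proposition \ref{prop: spbc} identifies $H^2_\rig(X_L/\mathcal{R}_L)^{\nabla=0}$ with $H^2_\rig(\mathcal{X}_{k_L}/\kappa_L)$, and in particular $H^2_\rig(X_L/\mathcal{R}_L)$ has a basis of horizontal sections. Since $\mathcal{R}_L/\mathcal{R}$ is unramified of Galois group $\mathrm{Gal}(L/K)$ and $H^2_\rig(X/\mathcal{R})\otimes_{\mathcal{R}} \mathcal{R}_L \cong H^2_\rig(X_L/\mathcal{R}_L)$ $\mathrm{Gal}(L/K)$-equivariantly, faithfully flat descent for $\nabla$-modules over $\mathcal{R}\to \mathcal{R}_L$ gives condition $(1)'$ over $\mathcal{R}$.

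For the reverse implication, I would follow the template of \cite[\S5]{LM18}, adapted using the $p$-adic ingredients assembled in \S\ref{sec: robba} and the discussion above. Assumption $(\star^{\rm s})$ yields a finite \emph{separable} extension $L_0/K$ over which $X$ admits a Kulikov model $\mathcal{X}_0\to \spec{\mathcal{O}_{L_0}}$. Enlarging $L_0$ if necessary, I may assume that $L_0/K$ is Galois, and then write $L_0/K' / K$ where $K'/K$ is the maximal unramified subextension and $L_0/K'$ is totally ramified Galois with group $G$. The first step is to show that the Kulikov model can be chosen to be smooth (i.e.\ of Type I) over $\mathcal{O}_{L_0}$: by the $p$-adic analogue of Matsumoto's Kulikov-type dichotomy \cite[Theorem 6.4]{CL16}, combined with the fact that condition $(1)'$ ascends from $\mathcal{R}$ to $\mathcal{R}_{L_0}$ and Proposition \ref{prop: spbc} identifies the horizontal sections with $H^2_\rig(\mathcal{X}_{0,k_{L_0}}/\kappa_{L_0})$, the special fibre of $\mathcal{X}_0$ must be a smooth K3 surface rather than a Type II or III degeneration. (This is exactly the role of \cite[Theorem 1.1]{Mat15} in the characteristic zero argument.)

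The second step is to descend this smooth model from $L_0$ to the unramified extension $K'/K$. Here I invoke Proposition \ref{prop: extension} applied to the totally ramified Galois extension $L_0/K'$: I need to verify that $G$ acts trivially on $H^2_\rig(X_{L_0}/\mathcal{R}_{L_0})^{\nabla=0}$. This follows because $(1)'$ over $\mathcal{R}$ forces $H^2_\rig(X/\mathcal{R})^{\nabla=0}$ to span $H^2_\rig(X/\mathcal{R})$, and after base change to $\mathcal{R}_{L_0}$ the $G$-action on horizontal sections is identified with the action on the (unchanged) $\kappa_{L_0}$-vector space $H^2_\rig(X/\mathcal{R})^{\nabla=0}\otimes_\kappa \kappa_{L_0}$, which is trivial by construction. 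Proposition \ref{prop: extension} then extends the $G$-action to $\mathcal{X}_0$ with trivial action on the special fibre, and the quotient $\mathcal{X}_0/G$ provides a smooth model of $X_{K'}$ over $\mathcal{O}_{K'}$ (the quotient is smooth because the action on the special fibre is trivial and $L_0/K'$ is totally ramified; this is the standard argument used at the end of the proof of \cite[Proposition 5.8]{LM18}). This yields $(\ref{num: grux})$.

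The main obstacle, as in \cite{LM18}, is the totally ramified descent step: one must show that triviality of the $G$-action on the $F$-isocrystal $H^2_\rig(X/\mathcal{R})^{\nabla=0}$ forces triviality of the induced action on the special fibre of a smooth model. This is precisely what Proposition \ref{prop: extension} packages up, relying in turn on the compatibility of crystalline and rigid cycle classes established in Proposition \ref{prop: cyccomp} and on the injectivity of $\mathrm{Aut}(Y)\to \mathrm{GL}(H^2_\rig(Y/\kappa))$ for K3 surfaces over $k$. Granting these, the argument is formally the same as in characteristic zero.
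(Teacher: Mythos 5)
Your proposal is correct and follows essentially the same route as the paper: $(\ref{num: grux})\Rightarrow(1)'$ via Proposition \ref{prop: spbc} plus descent of horizontal bases along the finite unramified extension $\mathcal{R}\to\mathcal{R}_L$, and $(1)'\Rightarrow(\ref{num: grux})$ by running the argument of \cite[\S5]{LM18} with \cite[Theorem 6.4]{CL16} standing in for \cite[Theorem 1.1]{Mat15} and Proposition \ref{prop: extension} standing in for \cite[Proposition 5.8]{LM18}. The paper's own proof is just a terser statement of exactly this reduction.
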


\begin{proof}
The implication $(\ref{num: grux})\Rightarrow (1)'$ follows from Proposition \ref{prop: spbc}, together with the fact that a $\pn$-module $M$ over $\mathcal{R}$ admits a basis of horizontal sections if and only if it does so after replacing $\kappa$ by a finite extension.

For the implication $(1)'\Rightarrow (\ref{num: grux})$, the main extra input we need is the correct analogue of \cite[Proposition 5.8]{LM18}, which is provided by 
Proposition \ref{prop: extension}. The completion of the proof of Theorem \ref{theo: LM18 main padic} is now geometric and identical to the $\ell$-adic case.
\end{proof} 

We can also extend the results of \cite{CLL17} to equicharacteristic with essentially no difficulties. 

\begin{theorem} 
Assume that $\mathrm{char}(K)=p>0$ and let $X$ be a K3 surface over $K$, satisfying  Assumption $(\star^{\rm s})$. Then, then following are equivalent:
\begin{enumerate}
\item $X$ has good reduction over $K$;
\item for some prime (resp. all primes) $\ell \neq p$, the $G_K$-representation $H^2_\et(X_{K^{\rm s}},\Q_\ell)$ is unramified, and there is an isomorphism
\[ H^2_\et(X_{K^{\rm s}},\Q_\ell) \overset{\cong}{\longrightarrow} H^2_\et(Y_{\bar k},\Q_\ell)\]
of $G_k$-representations.
\item the $\pn$-module $H^2_\rig(X/\mathcal{R})$ admits a basis of horizontal sections, and there is an isomorphism
\[ H^2_\rig(X/\mathcal{R})^{\nabla=0} \overset{\cong}{\longrightarrow} H^2_\mathrm{rig}(Y/\kappa) \]
of $F$-isocrystals over $\kappa$.
\end{enumerate}
\end{theorem}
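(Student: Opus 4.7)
The plan is to follow the strategy of \cite[Theorem 1.6]{CLL17}, replacing each of its inputs by the equicharacteristic $p$ analogue developed above.  The implications $(1)\Rightarrow(2)$ and $(1)\Rightarrow(3)$ will be essentially immediate.  For $(1)\Rightarrow(2)$, smooth and proper base change in \'etale cohomology identifies $H^2_\et(X_{K^{\rm s}},\Q_\ell)$ with $H^2_\et(Y_{\bar k},\Q_\ell)$ as $G_K$-representations, making both the unramifiedness and the isomorphism manifest.  For $(1)\Rightarrow(3)$, Proposition \ref{prop: spbc} furnishes the corresponding isomorphism of $F$-isocrystals $H^2_\rig(X/\mathcal{R})^{\nabla=0}\isomto H^2_\rig(Y/\kappa)$, from which the existence of a horizontal basis on $H^2_\rig(X/\mathcal{R})$ is a formal consequence.

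For the converse directions $(2)\Rightarrow(1)$ and $(3)\Rightarrow(1)$, I would first invoke Theorem \ref{theo: LM18 main sep}, respectively Theorem \ref{theo: LM18 main padic}, to obtain a finite unramified extension $L/K$ over which $X$ acquires good reduction, with smooth model $\mathcal{X}_L\to \spec{\mathcal{O}_L}$; uniqueness of the canonical reduction yields a canonical isomorphism $\mathcal{X}_{k_L}\cong Y_{k_L}$.  Set $G=\mathrm{Gal}(L/K)=\mathrm{Gal}(k_L/k)$.  The task then reduces to showing that $L=K$, which by faithfully flat descent along the finite \'etale map $\spec{\mathcal{O}_L}\to\spec{\mathcal{O}_K}$ is equivalent to extending the tautological $G$-descent datum on $X_L$ to one on $\mathcal{X}_L$, with trivial induced action on the special fibre.

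The descent step follows the pattern of \cite[\S4]{CLL17}: uniqueness of smooth K3 models ensures the $G$-action on $X_L$ extends to $\mathcal{X}_L$, inducing some action on $Y_{k_L}$.  The cohomological hypothesis in $(2)$, respectively $(3)$, compares the resulting $G$-action on $H^2_\et(Y_{\bar k},\Q_\ell)$, respectively $H^2_\rig(Y/\kappa)$, with the tautological $G_k$-action and forces these to agree; in particular $G\subset G_k$ acts trivially on cohomology.  Injectivity of $\mathrm{Aut}(Y_{k_L})\hookrightarrow \mathrm{GL}(H^2(Y_{k_L}))$ for K3 surfaces---via \cite[Corollary 2.5]{Ogu79} and \cite[Theorem 1.4]{Keu16} in the $p$-adic case---then forces the geometric $G$-action on $Y_{k_L}$ to be trivial, completing the descent.

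The main obstacle I foresee is not any single step, but rather the cumulative need to verify that all of the tools used in \cite{CLL17}---the smooth and proper base change isomorphism, compatibility of cycle classes under specialisation, and injectivity of the automorphism action on cohomology---admit versions in Robba ring valued $p$-adic cohomology in equicharacteristic $p$.  These have been supplied by Propositions \ref{prop: spbc} and \ref{prop: cyccomp}, together with the automorphism input quoted in the proof of Proposition \ref{prop: extension}; once they are in hand the argument parallels the characteristic zero case essentially verbatim, with Theorem \ref{theo: LM18 main padic} playing the role that the classical $\ell$-adic N\'eron--Ogg--Shafarevich criterion played in \cite{CLL17}.
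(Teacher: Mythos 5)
Your proposal is correct and follows essentially the same route as the paper: both reduce to the argument of \cite[\S\S7--9]{CLL17}, substituting Propositions \ref{prop: spbc} and \ref{prop: cyccomp} and the automorphism-injectivity input (\cite{Ogu79}, \cite{Keu16}) for their characteristic-zero counterparts, with Theorems \ref{theo: LM18 main sep} and \ref{theo: LM18 main padic} supplying the passage to a finite unramified extension. The only point the paper flags that you do not mention explicitly is the existence of flops required in \cite[Proposition 7.5]{CLL17}, which in residue characteristic $2$ needs the involution argument $z\mapsto H_1(x,y)-z$ from the proof of Theorem \ref{theo: LM18 main sep}.
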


\begin{proof}
The existence of flops that is required in \cite[Proposition 7.5]{CLL17}, follows from the same argument we used during the proof of Theorem \ref{theo: LM18 main sep}. Apart from this,
everything in \cite[\S7]{CLL17} works exactly the same in positive equicharacteristic.
The same is true for \cite[\S8]{CLL17}, replacing $\mathbb{D}_\mathrm{cris}(H^2_\et(X_{\overline{K}},\Q_p))$ everywhere by $H^2_\rig(X/\mathcal{R})^{\nabla=0}$, and using Proposition \ref{prop: cyccomp} in order to prove the correct analogue of \cite[Lemma 8.1]{CLL17}. Everything in \cite[\S9]{CLL17} goes through unchanged.
\end{proof}

\section{Descending good reduction under purely inseparable extensions} \label{sec: descent}

In this section, we shall show that smooth models of K3 surfaces descend under purely inseparable extensions of $K$, 
although we will start off in rather more generality than that. 
Let $S$ be a scheme and let $p:S'\rightarrow S$ be a purely inseparable (that is, radicial, or universally injective) fpqc covering.

\begin{lemma} The map $p:S'\rightarrow S$ is separated and the diagonal embedding $\Delta: S'\rightarrow S'\times_S S'$ is a nilpotent closed immersion.
\end{lemma}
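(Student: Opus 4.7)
The plan is to deduce both conclusions from the standard characterization of universally injective morphisms via their diagonal, combined with the general fact that the diagonal of any morphism of schemes is an immersion. Note that the fpqc (flatness) hypothesis plays no role in this lemma; only that $p$ is radicial will be used.

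For the separatedness, I would recall that a morphism $f$ of schemes is universally injective (equivalently, radicial) if and only if its diagonal $\Delta_f$ is surjective. Applied to $p$, this gives that $\Delta \colon S' \to S' \times_S S'$ is surjective. On the other hand, the diagonal of any morphism of schemes is an immersion, hence factors as $S' \xrightarrow{i} U \xrightarrow{j} S' \times_S S'$ with $j$ an open immersion onto an open subscheme $U \subset S' \times_S S'$ and $i$ a closed immersion into $U$. The surjectivity of $\Delta = j\circ i$ then forces $U = S' \times_S S'$, so that $j$ is an isomorphism and $\Delta = i$ is itself a closed immersion. This gives the separatedness of $p$.

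For the nilpotence claim, let $\mathcal{I} \subset \mathcal{O}_{S' \times_S S'}$ denote the quasi-coherent ideal sheaf cutting out $\Delta$. Surjectivity of $\Delta$ translates into $V(\mathcal{I}) = S' \times_S S'$, so for any affine open $\mathrm{Spec}(R) \subset S' \times_S S'$ the ideal $I := \mathcal{I}(\mathrm{Spec}(R)) \subset R$ is contained in every prime of $R$, and hence in the nilradical $\sqrt{0} \subset R$. Thus every local section of $\mathcal{I}$ is nilpotent, which is precisely the statement that $\Delta$ is a nilpotent closed immersion. The whole argument is essentially formal; if there is any subtlety at all, it is merely confirming that $V(\mathcal{I}) = S' \times_S S'$ really does force $\mathcal{I}$ to be a nil-ideal sheaf-theoretically, which in turn reduces via affine opens to the elementary fact that the nilradical of a commutative ring is the intersection of its prime ideals.
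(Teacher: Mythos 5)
Your proof is correct, and it takes a slightly different route from the paper's. The paper argues via the map $p$ itself: since $p$ is surjective and universally injective it is universally bijective, hence either projection $p_i\colon S'\times_S S'\to S'$ is bijective; as $\Delta$ is a section of $p_i$, both $\Delta$ and $p_i$ are homeomorphisms, so $\Delta$ is a locally closed immersion with closed image, hence a closed immersion, and being a homeomorphism its ideal is locally nil. You instead invoke the standard characterization that a morphism is radicial if and only if its diagonal is surjective, and then run the purely formal argument that a surjective immersion is a closed immersion and that a surjective closed immersion has ideal contained in the nilradical. The two arguments are essentially equivalent in content, but yours isolates exactly which hypotheses are used: you correctly observe that neither the flatness nor even the surjectivity of $p$ is needed, whereas the paper's phrasing leans on surjectivity to get universal bijectivity. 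One small caveat that applies equally to both proofs: what is actually established is that $\mathcal{I}$ is locally a nil ideal (every local section is nilpotent); the stronger statement $\mathcal{I}^N=0$ used later in Proposition \ref{prop: extending descent} is only obtained there by additionally assuming $\mathcal{I}$ is of finite type, so your proof matches the paper's in the precision of its conclusion.
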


\begin{proof}
Since $p$ is surjective and universally injective, it is universally bijective. 
In particular, we see that either projection $p_i:S'\times_S S'\rightarrow S'$ is bijective. 
Since $\Delta$ is a section of $p_i$, we deduce that $\Delta$ and $p_i$ are in fact both homeomorphisms. 
As $\Delta$ is a locally closed embedding with closed image, it must therefore be a closed immersion, and $p$ is therefore separated. 
Since $\Delta$ is a closed immersion inducing a homeomorphism on the underlying topological spaces, it is defined by a nilpotent sheaf of ideals.
\end{proof}

We denote by $\mathcal{I}$ the ideal of the diagonal $S'\hookrightarrow S'\times_S S'$, and write
\[ S'^{(n)}:= \underline{\mathrm{Spec}}_{S'\times_S S'}(\mathcal{O}_{S'\times_S S'}/\mathcal{I}^{n+1})\]
for the $n$th infinitesimal neighbourhood of $S'$ in $S'\times_S S'$. 
Let $f:Y\rightarrow S'$ be an algebraic space with relative tangent sheaf $\mathcal{T}_{Y/S'}:=\mathcal{H}\mathrm{om}_{\mathcal{O}_Y}(\Omega^1_{Y/S'},\mathcal{O}_Y)\in \mathrm{QCoh}(Y)$. 
For any morphism of schemes $U\rightarrow S$ we write $(-)_U$ for the fibre product $(-)\times_S U$. 

\begin{proposition} \label{prop: extending descent}Assume that:
\begin{enumerate}
\item the ideal $\mathcal{I}$ is of finite type;
\item each $S'^{(n)}$ is flat over $S'$ (under either projection);
\item $Y$ is smooth and separated over $S'$;
\item $f_*\mathcal{T}_{Y/S'}=0$, and $\mathbf{R}^1f_*\mathcal{T}_{Y/S'}$ is torsion free.
\end{enumerate} 
Then, for any dense open subscheme $U\subset S$, the function
\[ \left\{\text{descent data on }Y\text{ along }S'\rightarrow S \right\} \rightarrow \left\{\text{descent data on }Y_U\text{ along }S_U'\rightarrow U \right\}\]
obtained by restriction is a bijection.
\end{proposition}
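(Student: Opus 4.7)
My plan is to filter $W = S' \times_S S'$ by the infinitesimal neighbourhoods $Z_n = S'^{(n)}$ of the diagonal, which stabilizes at $Z_N = W$ for some $N$ (by assumption (1) and nilpotence of $\Delta$), and to analyse both directions of the bijection inductively on $n$ via the deformation theory of morphisms between smooth schemes. Let $q_n\colon Z_n \to S'$ denote the restriction of the second projection, which is flat by (2), and let $Y_n^{(i)}$ denote the pullback of $Y$ to $Z_n$ along the restriction of $p_i$.

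For injectivity, the difference of two descent data with matching restriction to $U$ is an automorphism $\alpha$ of $p_2^*Y$ over $W$ that is the identity on the diagonal $Z_0$ (since descent data always restrict to the identity on the diagonal). By standard infinitesimal deformation theory, given $\alpha|_{Z_n} = \iden$, the extensions to an automorphism of $Y_{n+1}^{(2)}$ over $Z_{n+1}$ lifting this identity form a torsor under $H^0(Z_n, q_n^* f_*\mathcal{T}_{Y/S'} \otimes_{\mathcal{O}_{Z_n}} \mathcal{I}^{n+1}/\mathcal{I}^{n+2})$, which vanishes by hypothesis (4) combined with flat base change for $q_n$. Since $\iden$ itself is a lift, it is the unique one, so $\alpha|_{Z_{n+1}} = \iden$; iterating yields $\alpha = \iden$.

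For surjectivity, I construct $\varphi$ on $W$ from $\varphi_U$ layer by layer: the base case $\varphi_0 = \iden$ on $Z_0$ is forced by $\varphi_U$ restricting to the identity on the diagonal, and given $\varphi_n$ on $Z_n$ the obstruction to extending to $\varphi_{n+1}$ lies, by the Leray spectral sequence together with $f_*\mathcal{T}_{Y/S'}=0$, in $H^0(S', \mathbf{R}^1 f_*\mathcal{T}_{Y/S'} \otimes_{\mathcal{O}_{S'}} \mathcal{I}^{n+1}/\mathcal{I}^{n+2})$. The key point is that $\mathcal{I}^{n+1}/\mathcal{I}^{n+2}$ is a flat $\mathcal{O}_{S'}$-module: the short exact sequence $0 \to \mathcal{I}^{n+1}/\mathcal{I}^{n+2} \to \mathcal{O}_{Z_{n+1}} \to \mathcal{O}_{Z_n} \to 0$ combined with (2) presents it as the kernel of a surjection of flat $\mathcal{O}_{S'}$-modules onto a flat $\mathcal{O}_{S'}$-module. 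Tensoring therefore preserves the torsion-freeness of $\mathbf{R}^1 f_*\mathcal{T}_{Y/S'}$ given by (4), and since $p$ is flat and surjective, going-down shows $S'_U = p^{-1}(U)$ contains every generic point of $S'$ and is hence dense. The restriction of $\varphi_U$ to $(Z_{n+1})_U$ provides an extension over $S'_U$, so the obstruction vanishes there; being a global section of a torsion-free sheaf vanishing on a dense open, it vanishes globally. Thus $\varphi_{n+1}$ exists, and the injectivity analysis applied at level $n+1$ forces the extension to agree with $\varphi_U$ over $(Z_{n+1})_U$. The cocycle identity for $\varphi = \varphi_N$ on $S' \times_S S' \times_S S'$ follows by the same injectivity argument applied to this triple product (which satisfies the analogues of (1)--(2)), since both sides of the cocycle equation agree on the $U$-locus where $\varphi_U$ is already a cocycle.

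The main obstacle is the torsion-freeness step in the obstruction analysis: tensoring a torsion-free sheaf by an arbitrary coherent sheaf can introduce torsion, so assumption (4) alone is insufficient, and one must extract flatness of the conormal graded pieces from assumption (2) in order to convert the extension provided by $\varphi_U$ over $U$ into the required global vanishing.
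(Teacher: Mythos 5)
Your proof is correct and follows essentially the same route as the paper: induction on the infinitesimal neighbourhoods of the diagonal, with flatness of the conormal graded pieces $\mathcal{I}^{n}/\mathcal{I}^{n+1}$ (extracted from hypothesis (2) exactly as you do) combining with torsion-freeness of $\mathbf{R}^1f_*\mathcal{T}_{Y/S'}$ to propagate vanishing of the obstruction from the dense open, and $f_*\mathcal{T}_{Y/S'}=0$ forcing the lift to be unique, hence compatible with $\varphi_U$ and with the cocycle identity. The only divergence is your injectivity argument, which uses vanishing of $H^0$ of the tangent sheaf in place of the paper's appeal to separatedness of $Y$ together with density of $U$ --- a harmless (indeed slightly stronger) variant, since it shows descent data are unique outright.
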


\begin{remark} Note that we do not assume $Y$ proper over $S'$. 
However, the condition that $f_*\mathcal{T}_{Y/S'}=0$ will generally force this in practice.
\end{remark}

\begin{proof} Write $p_i^*$ for pullback along either projection $p_i:S'\times_S S'\rightarrow S'$. 
The given function is injective since $Y$ is separated over $S'$, thus an isomorphism $p_1^*Y_U \rightarrow p_2^*Y_U$ can extend 
to at most one isomorphism $p_1^*Y\isomto p_2^*Y$. 
To see that it is surjective, it suffices to show that we can always extend such an isomorphism
(the fact that such an extension will define a descent datum again 
follows from separatedness of $Y\rightarrow S'$ and the corresponding fact for $Y_U$). Since $S'\rightarrow S'\times_S S'$ is a homeomorphism, $S'\times_S S'$ is covered by open sets of the form $V\times_S V$ with $V\subset S'$ open and affine. Therefore appealing once more to the fact that $U\subset S$ is dense and $Y\rightarrow S'$ is separated, we can see that the problem of extending $\alpha_U$ is local on both $S$ and $S'$, which we may therefore assume to be affine, say $S=\spec{R}$ and $S'=\spec{R'}$. (We may have lost surjectivity of $S'\rightarrow S$ but this doesn't matter). We may also assume that $U=\spec{R[g^{-1}]}$ is a basic open affine in $S$.

We will prove that any such $\alpha_U:p_1^*Y_U \rightarrow p_2^*Y_U$ extends by using deformation theory. Let $R'':=R'\otimes_R R'$ and let $I=\ker \left( R''\rightarrow R'\right)$. 
Thus, $I$ is a nilpotent ideal. By assumption, it is finitely generated and hence, $I^N=0$ for $N$ large enough. 
If we write $R'^{(n)}:= R''/I^{n+1}$, then by assumption each $R'^{(n)}$ is flat over $R$, hence so is the kernel $I^n/I^{n+1}$ 
of the surjection $R'^{(n)}\rightarrow R'^{(n-1)}$. 
Let $p_i^{(n)*}Y$ denote the base change to $R'^{(n)}$ along either `projection' map $p_i^{(n)}:R'\rightarrow R^{'(n)}$.
We will show by induction on $n\geq 1$ that $\alpha_U^{(n)}: p_1^{(n)*}Y_U \isomto p_2^{(n)*}Y_U$ extends to an isomorphism 
$p_1^{(n)*}Y \isomto p_2^{(n)*}Y$.
Since $I^N=0$ for large enough $N$, this suffices to prove the result.

The case $n=0$ is trivial and we may therefore suppose that $n\geq 1$ and that we are given $p_1^{(n-1)*}Y \isomto p_2^{(n-1)*}Y$ extending $\alpha_U^{(n-1)}$. 
We can thus view both $p_1^{(n)*}Y$ and $p_2^{(n)*}Y$ as deformations of $p_1^{(n-1)*}Y$ along $R'^{(n)}\twoheadrightarrow R'^{(n-1)}$. 
By flatness of $I^n/I^{n+1}$ over $R'$, the isomorphism classes of such deformations are controlled by
\[ H^1( Y, \mathcal{T}_{Y/R'})\otimes_{R'} I^n/I^{n+1} ,\]
and we are given that these two classes map to the same element in
\[ H^1( Y, \mathcal{T}_{Y/R'})\otimes_{R'} I^n/I^{n+1} \otimes_R R[g^{-1}]. \]
But by the torsion-free hypothesis on $\mathbf{R}^1f_*\mathcal{T}_{Y/S'}$, we know that 
\[ H^1( Y, \mathcal{T}_{Y/R'})\otimes_{R'} I^n/I^{n+1} \hookrightarrow H^1( Y, \mathcal{T}_{Y/R'})\otimes_{R'} I^n/I^{n+1} \otimes_R R[g^{-1}] \]
is injective, from which we deduce that $p_1^{(n)*}Y$ and $p_2^{(n)*}Y$ are isomorphic as deformations of $p_1^{(n-1)*}Y$. 
Moreover, the automorphism group of a given deformation of $p_1^{*(n-1)}Y_U$ is
\[ H^0( Y, \mathcal{T}_{Y/R'})\otimes_{R'} I^n/I^{n+1} \otimes_R R[g^{-1}] =0, \]
since $f_*\mathcal{T}_{Y/S'}=0$, from which we deduce that \emph{any} isomorphism $p_1^{(n)*}Y \isomto p_2^{*(n)}Y$ as deformations of $p_1^{(n-1)*}Y$ 
restricts to the given isomorphism $\alpha_U^{(n)}: p_1^{(n)*}Y_U \isomto p_2^{(n)*}Y_U$. \end{proof}

Concerning effectivity of descent, we have the following, presumably well-known, result.

\begin{proposition} \label{prop: effective} 
 For any algebraic space $Y\rightarrow S'$, all descent data on $Y$ along $p:S'\rightarrow S$ are effective.
\end{proposition}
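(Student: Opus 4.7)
The plan is to reduce to an affine base and then descend an étale atlas of $Y$ using the crucial nilpotence of the diagonal $\Delta$ established in the preceding lemma.

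First I would reduce to the case where $S = \spec{R}$ and $S' = \spec{R'}$ are affine. Since $p: S' \to S$ is separated by the preceding lemma, descent data for algebraic spaces glue along Zariski covers of $S$, so this reduction is routine. Next, choose an étale atlas $U \to Y$ by an affine scheme. The key ingredient is the topological invariance of the small étale site applied to the nilpotent closed immersion $\Delta: S' \hookrightarrow S' \times_S S'$: pullback along $\Delta$ is an equivalence between étale algebraic spaces over $S' \times_S S'$ and étale algebraic spaces over $S'$. Consequently, the two étale atlases $p_1^*U \to p_1^*Y$ and $p_2^*U \to p_2^*Y$ are canonically identified, up to unique isomorphism, under the descent isomorphism $\alpha: p_1^*Y \isomto p_2^*Y$, so $\alpha$ lifts canonically to an fpqc descent datum for the affine scheme $U$ along $p: S' \to S$.

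By classical fpqc descent for affine schemes, $U$ then descends to an affine scheme $U_0 \to S$ with $U_0 \times_S S' \cong U$ compatibly with the chosen datum. Repeating the argument for the étale equivalence relation $R := U \times_Y U$ (again using nilpotence of $\Delta$ to canonically transport the restricted descent datum from $Y$ to the scheme-level datum on $R$), we obtain $R_0 \to S$ together with an étale equivalence relation structure $R_0 \rightrightarrows U_0$. Taking the quotient $Y_0 := U_0/R_0$ produces an algebraic space over $S$, and one verifies that $Y_0 \times_S S' \cong Y$ recovers the given descent datum. The cocycle condition on $\alpha$ is used precisely to ensure compatibility of the scheme-level data on the triple product $S' \times_S S' \times_S S'$, whose diagonal into this triple product is also nilpotent.

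The main obstacle is the transfer of the descent datum from the algebraic space $Y$ to a \emph{scheme}-level datum on an atlas $U$. For an arbitrary fpqc cover this step fails: étale atlases of $p_1^*Y$ and $p_2^*Y$ need not be canonically identified by $\alpha$, and fpqc effectivity for algebraic spaces is genuinely delicate in general. The special feature of purely inseparable fpqc covers, namely the nilpotence of $\Delta$, is exactly what makes the identification canonical via topological invariance of the small étale site; classical fpqc descent for affine schemes and étale equivalence relations then finishes the job.
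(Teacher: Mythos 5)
Your proposal is correct and follows essentially the same route as the paper: present $Y$ by an \'etale equivalence relation of affine schemes, use the nilpotence of the diagonal $S'\hookrightarrow S'\times_S S'$ together with topological invariance of the small \'etale site to transport the descent datum on $Y$ to scheme-level descent data on the atlas and the equivalence relation, descend these by classical fpqc descent for affines, and take the quotient. The only cosmetic difference is that you spell out the reduction to affine $S$ and the cocycle check on the triple product, which the paper leaves implicit.
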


\begin{proof}
This is entirely straightforward for schemes and not much harder for algebraic spaces. 
Let $Y\cong [R \rightrightarrows U]$ be a presentation of $Y$ via an \'etale equivalence relation of affine schemes over $S'$. 
Let $\alpha:p_1^*Y \isomto p_2^*Y$ be an isomorphism defining a descent datum on $Y$. 
Since the diagonal $S'\rightarrow S'\times_S S'$ is a nilpotent closed immersion, it is a universal homeomorphism, and we can deduce from topological invariance 
of the small \'etale site (in particular, from the equivalence $\mathrm{\acute{E}t}(Y)\cong \mathrm{\acute{E}t}(p_i^*Y)$ for each $i$) 
that we can uniquely extend $\alpha$ to an isomorphism
\[ p_1^*R\rightrightarrows p_1^*U \cong p_2^* R\rightrightarrows p_2^*U \] 
of \'etale equivalence relations. This isomorphism is compatible on $S'\times_S S'\times_S S'$ and hence (since $U$ and $R$ are affine), descends to an \'etale equivalence 
relation $R_S \rightrightarrows U_S $ of affine schemes over $S$. 
Thus, taking $X\cong [R_S\rightrightarrows U_S]$ to be the corresponding quotient we get the required descent of $Y$ to $S$.
\end{proof}

We will apply these results as follows. 
Let $\mathcal{O}_K$ be our fixed excellent and Henselian DVR with fraction field $K$. 
Let $X$ be a smooth, proper, and geometrically connected scheme over $K$ and $L/K$ a finite and purely inseparable extension. 
The integral closure $\mathcal{O}_L$ of $\mathcal{O}_K$ inside $L$ is then also an excellent and Henselian DVR. 
Suppose that we have a smooth model $\mathcal{Y}$ for $X_L$ over $\mathcal{O}_L$.

\begin{theorem}\label{thm: gr desc}
Assume that $H^0(X,\mathcal{T}_{X/K})=0$ and that $H^1(\mathcal{Y},\mathcal{T}_{\mathcal{Y}/\mathcal{O}_L})$ is torsion free. 
Then, $\mathcal{Y}$ descends uniquely to a smooth model for $X$ over $\mathcal{O}_K$.
\end{theorem}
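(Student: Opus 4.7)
The plan is to apply Propositions \ref{prop: extending descent} and \ref{prop: effective} to the fpqc cover $S' = \spec{\mathcal{O}_L} \to S = \spec{\mathcal{O}_K}$, with algebraic space $Y = \mathcal{Y}$ and dense open $U = \spec{K} \subset S$. Excellence of $\mathcal{O}_K$ makes $\mathcal{O}_L$ finite and torsion-free, hence free, over $\mathcal{O}_K$, so the cover is fpqc; pure inseparability of $L/K$ together with perfectness of $k$ (forcing $k_L = k$) makes it radicial. The canonical isomorphism $\mathcal{Y}_L = X_L \cong X \times_K L$ provides a descent datum on $\mathcal{Y}_L$ along $\spec{L} \to \spec{K}$; Proposition \ref{prop: extending descent} will extend it to a descent datum on $\mathcal{Y}$, which Proposition \ref{prop: effective} will render effective.

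For the hypotheses of Proposition \ref{prop: extending descent}: (i) is automatic from Noetherianity, and (iii) holds since $\mathcal{Y}$ is smooth and (being a proper model) separated. For (iv), torsion-freeness of $\mathbf{R}^1 f_* \mathcal{T}_{\mathcal{Y}/\mathcal{O}_L}$ is the standing hypothesis; to deduce the vanishing of $f_* \mathcal{T}_{\mathcal{Y}/\mathcal{O}_L}$, I would note that $\mathcal{T}_{\mathcal{Y}/\mathcal{O}_L}$ is locally free on the $\mathcal{O}_L$-flat space $\mathcal{Y}$, so any uniformizer of $\mathcal{O}_L$ acts injectively on it, making $f_* \mathcal{T}_{\mathcal{Y}/\mathcal{O}_L}$ a finitely generated, torsion-free $\mathcal{O}_L$-module. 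Flat base change identifies its generic fibre with $H^0(X_L, \mathcal{T}_{X_L/L}) = H^0(X, \mathcal{T}_{X/K}) \otimes_K L = 0$, so the module itself vanishes.

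The main technical step is hypothesis (ii): flatness of each $S'^{(m)}$ over $S'$. To verify it I would exploit the structure theory available in our setting. Take any uniformizer $\pi_L$ of $\mathcal{O}_L$; since $L/K$ is purely inseparable of degree $p^n$, $\pi_L^{p^n} \in K$, and a short valuation count shows $\pi' := \pi_L^{p^n}$ is a uniformizer of $\mathcal{O}_K$. The inclusion $\mathcal{O}_K[\pi_L] \hookrightarrow \mathcal{O}_L$ of finite free $\mathcal{O}_K$-modules of rank $p^n$ becomes an isomorphism modulo $\pi'$ (both sides being $k$-vector spaces of dimension $p^n$ with basis $1, \pi_L, \ldots, \pi_L^{p^n-1}$), so by Nakayama $\mathcal{O}_L = \mathcal{O}_K[X]/(X^{p^n} - \pi')$. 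The identity $X^{p^n} - \pi_L^{p^n} = (X - \pi_L)^{p^n}$ in characteristic $p$ then yields
\[ \mathcal{O}_L \otimes_{\mathcal{O}_K} \mathcal{O}_L \;\cong\; \mathcal{O}_L[u]/(u^{p^n}), \qquad u := X - \pi_L, \]
with diagonal ideal $I = (u)$, so that $S'^{(m)} = \mathcal{O}_L[u]/(u^{\min(m+1, p^n)})$ is free over $\mathcal{O}_L$ under either projection.

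With all hypotheses in place, the descent datum on $\mathcal{Y}_L$ extends uniquely to $\mathcal{Y}$ (Proposition \ref{prop: extending descent}) and becomes effective (Proposition \ref{prop: effective}), producing an algebraic space $\mathcal{X} \to \spec{\mathcal{O}_K}$ with $\mathcal{X}_{\mathcal{O}_L} \cong \mathcal{Y}$. By compatibility on the generic fibre, $\mathcal{X}_K \cong X$; smoothness and properness of $\mathcal{X}$ descend from $\mathcal{Y}$ along the faithfully flat finite cover $S' \to S$; uniqueness is inherited from the uniqueness clause in Proposition \ref{prop: extending descent}.
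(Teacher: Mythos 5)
Your proof is correct and follows exactly the paper's argument: apply Proposition \ref{prop: extending descent} with $S=\spec{\mathcal{O}_K}$, $S'=\spec{\mathcal{O}_L}$, $U=\spec{K}$ to extend the canonical descent datum on $X_L$ to $\mathcal{Y}$, then Proposition \ref{prop: effective} for effectivity. The only difference is that you explicitly verify the hypotheses of Proposition \ref{prop: extending descent} (in particular the flatness of the infinitesimal neighbourhoods via $\mathcal{O}_L\otimes_{\mathcal{O}_K}\mathcal{O}_L\cong\mathcal{O}_L[u]/(u^{p^n})$, and the deduction of $f_*\mathcal{T}_{\mathcal{Y}/\mathcal{O}_L}=0$ from $H^0(X,\mathcal{T}_{X/K})=0$ by torsion-freeness), which the paper leaves implicit; these verifications are all correct.
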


\begin{proof}
Since $X_L$ comes with a canonical descent datum, we apply Proposition \ref{prop: extending descent} 
with $S=\spec{\mathcal{O}_K}$, $S'=\spec{\mathcal{O}_L}$ and $U=\spec{K}$ to extend this descent datum to $\mathcal{Y}$, 
and then apply Propostion \ref{prop: effective} to deduce that this descent datum is effective.
\end{proof}

Since K3 surfaces do not admit non-zero global vector fields \cite{RS76}, the previous result applies to them:

\begin{corollary} \label{cor: gr desc} 
 Suppose that $X$ is a K3 surface over $K$. 
 Then, any smooth model for $X_L$ descends uniquely to a smooth model for $X$. 
 In particular, $X$ has good reduction over $K$ if and only if $X_L$ has good reduction over $L$.
\end{corollary}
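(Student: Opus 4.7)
The statement splits into two assertions: (i) any smooth model $\mathcal{Y}$ of $X_L$ over $\mathcal{O}_L$ descends uniquely to a smooth model of $X$ over $\mathcal{O}_K$; and (ii) the equivalence of good reduction over $K$ and over $L$. Once (i) is established, (ii) is immediate: good reduction of $X$ over $K$ pushes up to $L$ by base change, while the converse is exactly (i). The entire task is therefore to invoke Theorem \ref{thm: gr desc}, which reduces the problem to verifying the two cohomological hypotheses $H^0(X,\mathcal{T}_{X/K})=0$ and the torsion-freeness of $H^1(\mathcal{Y},\mathcal{T}_{\mathcal{Y}/\mathcal{O}_L})$.

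The first vanishing is the theorem of Rudakov--Shafarevich \cite{RS76}, which asserts that a K3 surface over any field has no non-zero global vector fields; applied to $X/K$ this gives $H^0(X,\mathcal{T}_{X/K})=0$ directly.

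For the second condition I would first argue that the special fibre $\mathcal{Y}_{k_L}$ is also a K3 surface. Since $\mathcal{Y}\rightarrow \spec{\mathcal{O}_L}$ is smooth and proper with a K3 generic fibre, $\mathcal{Y}_{k_L}$ is a smooth proper surface, its canonical bundle is trivial (as the relative canonical is a line bundle on $\mathcal{Y}$ whose restriction to the generic fibre is trivial, and by Lefschetz-type arguments descends to a trivial bundle over the local base), and the first Betti number remains zero by constancy of $\ell$-adic cohomology (for $\ell\neq p$) in smooth proper families. Now \cite{RS76} gives $H^0(\mathcal{Y}_{k_L},\mathcal{T}_{\mathcal{Y}_{k_L}/k_L})=0$, while Serre duality together with $\omega_{\mathcal{Y}_{k_L}}\cong\mathcal{O}$ and $h^0(\Omega^1)=0$ gives $H^2(\mathcal{Y}_{k_L},\mathcal{T}_{\mathcal{Y}_{k_L}/k_L})=0$ as well. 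Cohomology and base change (Grauert's theorem) applied to the smooth proper family $\mathcal{Y}/\mathcal{O}_L$ and the locally free sheaf $\mathcal{T}_{\mathcal{Y}/\mathcal{O}_L}$ then yields that $H^1(\mathcal{Y},\mathcal{T}_{\mathcal{Y}/\mathcal{O}_L})$ is a free $\mathcal{O}_L$-module (of rank $20$), and in particular torsion-free.

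\paragraph*{Main obstacle.}
The only real subtlety is the identification of $\mathcal{Y}_{k_L}$ as a K3 surface, since a priori the ``good reduction'' hypothesis only guarantees smoothness of the special fibre. The argument above goes through in arbitrary characteristic once one knows that smooth proper specializations of K3 surfaces over a DVR remain K3; this is standard but is the one point at which one must pay attention to positive characteristic. Once this is in hand, Theorem \ref{thm: gr desc} closes the argument with no further input.
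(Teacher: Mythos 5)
Your proof is correct and takes essentially the same route as the paper, which gives no argument beyond citing Rudakov--Shafarevich \cite{RS76} for the absence of vector fields and invoking Theorem \ref{thm: gr desc}. The details you supply for the second hypothesis (that the special fibre is again a K3, hence has no vector fields, hence $H^1(\mathcal{Y},\mathcal{T}_{\mathcal{Y}/\mathcal{O}_L})$ is torsion free by cohomology and base change) are exactly what the paper leaves implicit and are sound; in fact the vanishing of $H^0(\mathcal{Y}_{k_L},\mathcal{T}_{\mathcal{Y}_{k_L}/k_L})$ alone already yields torsion-freeness via the base-change exact sequence over the DVR, so the $H^2$ computation via Serre duality is not strictly needed.
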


\section{Main results} \label{sec: together}

Using the results of \S\ref{sec: descent}, it is now straightforward to deduce our main results, 
via the following proposition.

\begin{proposition} 
 Suppose that $\mathrm{char}(K)=p>0$ and let $X$ be a K3 surface over $K$. 
 Assume that either of the following conditions holds:
\begin{enumerate}
\item for some prime $\ell\neq p$ the $G_K$-representation $H^2_\et(X_{K^{\rm s}},\Q_\ell)$ is unramified;
\item the $\pn$-module $H^2_\rig(X/\mathcal{R})$ admits a basis of horizontal sections.
\end{enumerate}
Then, $X$ satisfies Assumption $(\star)$ if and only if it satisfies Assumption $(\star^{\rm s})$.
\end{proposition}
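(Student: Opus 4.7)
The plan is as follows. The implication $(\star^{\rm s}) \Rightarrow (\star)$ is immediate, so the content is in the converse. Assume $X$ satisfies $(\star)$ with Kulikov model $\mathcal{X}/\mathcal{O}_L$ for some finite extension $L/K$, together with one of the cohomological hypotheses. The idea is to bootstrap through $L$: the surface $X_L$ trivially satisfies $(\star^{\rm s})$ over $L$ (using the trivial separable extension $L/L$), and the cohomological hypothesis passes to $X_L$ since unramifiedness of the $G_K$-action restricts to unramifiedness of the $G_L$-action, while a horizontal basis of $H^2_\rig(X/\mathcal{R})$ pulls back along $\mathcal{R} \to \mathcal{R}_L$ to one of $H^2_\rig(X_L/\mathcal{R}_L)$.

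First I will apply Theorem \ref{theo: LM18 main sep} or Theorem \ref{theo: LM18 main padic} to $X_L$ over $L$, yielding good reduction of $X_L$ over some finite unramified extension $M/L$, via a smooth model $\mathcal{Y} \to \mathrm{Spec}(\mathcal{O}_M)$. Next I will split $M/K$ by letting $N$ be the separable closure of $K$ inside $M$, so that $N/K$ is finite and separable while $M/N$ is finite and purely inseparable; the ring $\mathcal{O}_N$, being the integral closure of $\mathcal{O}_K$ in a finite extension, is still an excellent Henselian DVR. Since $X_M = (X_N)_M$, Corollary \ref{cor: gr desc} (applied with the base $K$ replaced by $N$) descends $\mathcal{Y}$ uniquely to a smooth model of $X_N$ over $\mathcal{O}_N$.

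To finish, I will note that such a smooth model of the K3 surface $X_N$ over the DVR $\mathcal{O}_N$ is automatically a Kulikov model: the smooth special fibre is a trivial instance of an SNC divisor, and the relative canonical bundle, being a line bundle on a smooth proper scheme over $\mathcal{O}_N$ with irreducible special fibre and with restriction to the K3 generic fibre trivial, must itself be trivial. This will show that $X$ satisfies $(\star^{\rm s})$ via the separable witness $N/K$. The main subtlety is that one cannot apply Corollary \ref{cor: gr desc} directly to descend from $\mathcal{O}_L$ down to $\mathcal{O}_K$, since $L/K$ need not be purely inseparable; going up first to $M$ and then descending only the purely inseparable part $M/N$ is the key trick that makes everything fit together.
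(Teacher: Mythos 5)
Your proof is correct and follows essentially the same route as the paper: obtain good reduction of $X_L$ from the cohomological hypothesis, then split off the maximal separable subextension and use Corollary \ref{cor: gr desc} to descend the smooth model along the remaining purely inseparable part. The only cosmetic difference is that the paper invokes \cite[Theorem 6.4]{CL16} to get good reduction over $L$ itself (so it takes the separable closure of $K$ inside $L$), whereas you route through Theorems \ref{theo: LM18 main sep}/\ref{theo: LM18 main padic} and allow a further unramified extension $M/L$ before descending; both work, and your explicit check that a smooth model is a Kulikov model fills in a step the paper leaves implicit.
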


\begin{proof}
We clearly have that Assumption $(\star^{\rm s})$ implies Assumption $(\star)$. 

For the converse direction, let $L/K$ be a finite extension over which $X$ admits a Kulikov model. 
Note that either condition of the proposition also holds for the base change $X_L$ and hence, by \cite[Theorem 6.4]{CL16} we know that $X_L$ has good reduction. 
If we let $K\subset K_1\subset  L$ denote the maximal separable subextension, it therefore follows from Corollary \ref{cor: gr desc} 
that $X_{K_1}$ has good reduction. In particular, $X$ admits a Kulikov model over $K_1$ and thus, satisfies Assumption $(\star)$.
\end{proof}

Applying the results of \S\ref{sec: recap}, Theorems \ref{theo: LM18 main equi} and \ref{theo: CLL17 main equi} now follow immediately.

\section{A counter-example}\label{sec: counter}

In this section, we will explain how to modify the example given in \cite[\S7]{LM18} to produce K3 surfaces in equicharacteristic $p>0$ that only admit good reduction after a \emph{non-trivial} finite and unramified extension. Let $p\geq 5$, choose $c\in \F_p^*\setminus (\F_p^*)^2$, $a\in \F_p^*\setminus \left\{\frac{16}{27}\right\}$ and let $F\in \F_p[t][x,y,z,w]$ be the polynomial
\[ F:= w(x^3+y^3+z^3+(t+a(1-t))w^3)+(tz^2+xy+tyz)^2-(c-t)t^2y^2z^2. \]
Define $\mathcal{X}:= V(F) \subset \P^3_{\F_p\pow{t}}$ and let $X=\mathcal{X}_{\F_p\lser{t}}$ be its generic fibre.

\begin{theorem} \label{theo: counter example}
$X$ is a smooth K3 surface over $\F_p\lser{t}$ that admits good reduction over $\F_{p^2}\lser{t}$ but not over $\F_p\lser{t}$.
\end{theorem}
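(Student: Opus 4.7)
The construction is designed to be a direct equicharacteristic analogue of the mixed-characteristic counterexample of \cite[\S7]{LM18}, and the proof proceeds in three parallel steps.

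The first step is to verify that $X$ is a smooth K3 surface. Since $V(F)\subset\P^3_{\F_p\pow{t}}$ is a quartic, it suffices to check by the Jacobian criterion that the singular locus of $V(F)$ lies entirely on the special fibre. The hypotheses $a\in\F_p^*\setminus\{16/27\}$ and $c\in\F_p^*\setminus(\F_p^*)^2$ are chosen precisely for this: the singular points forced by $\partial_z F$ and the other partials are all either at $t=0$ or excluded by these two conditions. This is the direct analogue of the corresponding computation in \cite[\S7]{LM18}. A smooth quartic surface in $\P^3$ is K3, so $X$ is K3.

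The second step is to establish good reduction over $K':=\F_{p^2}\lser{t}$. Since $c$ becomes a square in $\F_{p^2}$, Hensel's lemma over $\mathcal{O}_{K'}=\F_{p^2}\pow{t}$ yields $c-t=\beta^2$ for some $\beta\in\mathcal{O}_{K'}^\times$, producing the factorisation
\[
(tz^2+xy+tyz)^2-(c-t)t^2y^2z^2 \;=\; G_+\cdot G_-,\qquad G_\pm := tz^2+xy+(1\pm\beta)tyz.
\]
The pullback $\mathcal{X}_{\mathcal{O}_{K'}}$ then has a special fibre with rational double points at the intersections of $\{w=0\}$ with $\{G_\pm=0\}$, and blowing up either of the Weil divisors $\{w=G_+=0\}$ (respectively $\{w=G_-=0\}$) yields a small resolution $\widetilde{\mathcal{X}}'\to\mathcal{X}_{\mathcal{O}_{K'}}$; this is a smooth proper algebraic space model of $X_{K'}$, exactly as in \cite[\S7]{LM18}. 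The only point requiring care is that arithmetic threefold flops/small resolutions of this type exist in equicharacteristic $p>0$; this follows from the results credited to Matsumoto in the acknowledgements.

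The third step, and the heart of the argument, is to rule out good reduction over $K=\F_p\lser{t}$. Suppose for contradiction that $X$ has good reduction over $K$, so in particular $X$ satisfies Assumption $(\star^{\rm s})$. By Theorem \ref{theo: CLL17 main equi} one would get an isomorphism of $G_{\F_p}$-representations
\[
H^2_\et(X_{K^{\rm s}},\Q_\ell)\;\cong\;H^2_\et(Y_{\bar k},\Q_\ell),
\]
where $Y$ is the canonical reduction over $\F_p$. Computing the left-hand side via the smooth model from Step 2, one finds that the arithmetic Frobenius action is twisted by the non-trivial character of $\mathrm{Gal}(\F_{p^2}/\F_p)$ arising from $\sqrt c$: the Galois involution maps $\beta\mapsto-\beta$ and so swaps the two Weil divisors $\{G_+=0\}$ and $\{G_-=0\}$, hence swaps the two possible small resolutions of $\mathcal{X}_{\mathcal{O}_{K'}}$. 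Tracking this swap through the comparison with $Y$ exhibits a concrete cohomology class (the exceptional class of the small resolution) on which the two Frobenius actions disagree, contradicting the required isomorphism. The verification is formally identical to the one in \cite[\S7]{LM18}, using Proposition \ref{prop: cyccomp} and the $p$-adic formalism of \S\ref{sec: robba} in place of the mixed-characteristic $\ell$-adic and $p$-adic inputs there.

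\textbf{Main obstacle.} The substantive content is Step 3, where one must make the Galois-swap argument fully rigorous: that is, show that any smooth model over $\mathcal{O}_K$ would, after base change to $\mathcal{O}_{K'}$, produce a $G$-equivariant choice of small resolution of $\mathcal{X}_{\mathcal{O}_{K'}}$, and that no such choice exists. Everything else is a translation of the mixed-characteristic construction of \cite[\S7]{LM18} into the equicharacteristic $p$ setting, and the translation is routine given the machinery developed in \S\ref{sec: robba}--\S\ref{sec: descent}.
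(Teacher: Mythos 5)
Your proposal is correct in outline and Steps 1 and 2 match the paper: smoothness is checked by spreading out over $\F_p[t]$ (the paper does this concretely by reducing modulo $1-t$, i.e.\ specialising at $t=1$, where the surface is visibly smooth), and good reduction over $\F_{p^2}\lser{t}$ comes from the factorisation of $(tz^2+xy+tyz)^2-(c-t)t^2y^2z^2$ using $\sqrt{c-t}\in\F_{p^2}\pow{t}$ and the resulting small resolution, exactly as you describe. The one genuine divergence is in Step 3. You route the non-descent through the cohomological criterion of Theorem \ref{theo: CLL17 main equi}, which would require actually computing the two Frobenius modules and is left as a sketch; the paper instead argues directly, exactly as in \cite[Theorem 7.2]{LM18}: the two ideals $\mathcal{I}_\pm=(w,\,tz^2+xy+tyz\pm tyz\sqrt{c-t})$ define distinct classes in the strictly local Picard group $\mathrm{Pic}(\mathcal{O}^{\rm sh}_{\mathcal{X},\bar x})$ that are interchanged by $\mathrm{Gal}(\F_p\lser{t}^{\rm s}/\F_p\lser{t})$, so no small resolution (hence no smooth model) can exist over $\F_p\pow{t}$. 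This is precisely the ``Galois-swap'' argument you isolate as the main obstacle, so your instinct about where the content lies is right; the paper's version is the more economical one, needing no cohomological machinery. Two small corrections: the hypothesis $c\in\F_p^*\setminus(\F_p^*)^2$ is not there for smoothness of $X$ but to guarantee that $\sqrt{c-t}\notin\F_p\pow{t}$, i.e.\ that the Galois swap is non-trivial; and the special fibre should be identified (as the paper does) with that of the example $\mathcal{X}(p)$ of \cite{LM18}, a K3 surface with six RDP singularities, which is what makes the ``exactly as in \cite[Theorem 7.2]{LM18}'' citations legitimate.
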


\begin{proof}
Exactly as in \cite[Theorem 7.2]{LM18}, we can verify smoothness of $X$ by considering the subscheme cut out by $F$ in $\P^3_{\F_p[t]}$ and reducing modulo $1-t$. The special fibre of $\mathcal{X}$ is isomorphic to the special fibre of the example $\mathcal{X}(p)$ constructed in \cite{LM18}, it is therefore a singular K3 surface with 6 RDP singularities, two of which are defined over $\F_p$ and the other four over $\F_p[\zeta_3]$. Since $\sqrt{c-t}\in \F_{p^2}\pow{t}$, after extending scalars to $\F_{p^2}\pow{t}$ we can resolve the singularities of $\mathcal{X}$ by blowing up either ideal
\[ \mathcal{I}_{\pm}=(w,tz^2+xy+tyz\pm tyz\sqrt{c-t}).\]
However, these two different ideals give rise to distinct elements of the strictly local Picard group $\mathrm{Pic}(\mathcal{O}^{\rm sh}_{\mathcal{X},\bar{x}})$, which are interchanged by the action of  $\mathrm{Gal}(\F_p\lser{t}^{\rm{s}}/\F_p\lser{t})$. Thus, exactly as in \cite[Theorem 7.2]{LM18}, $X$ cannot have a smooth model over $\F_p\pow{t}$.
\end{proof}

\bibliographystyle{../../Templates/bibsty}
\bibliography{../../lib.bib}

\end{document}